\numberwithin{equation}{section}
\newcommand{\bla}{\bm{\lambda}}
\newcommand{\La}{\Lambda}
\newcommand{\bu}{{\mathbf u}}
\newcommand{\bv}{{\mathbf v}}
\newcommand{\de}{\delta}
\newcommand{\be}{\begin{equation}}
\newcommand{\ee}{\end{equation}}
\newcommand{\ga}{{\gamma}}
\newcommand{\la}{\lambda}
\newcommand{\barla}{\overline{\lambda}}
\newcommand{\D}{\mathbb{D}}
\newcommand{\Pf}{\rm{Pf}}
\newcommand{\VGamma}{\Gamma_{V}}
\newcommand{\Spec}{\rm{Spec}}
\newcommand{\E}{\mathbb{E}}
\newcommand{\R}{\mathbb{R}}
\newcommand{\C}{\mathbb{C}}
\renewcommand{\H}{\mathbb{H}}
\newcommand{\dd}{\mathrm{d}}
\newcommand{\tr}{\mathrm{tr}}
\newcommand{\Ov}{\mathscr{O}}
\newcommand{\disteq}{\stackrel{d}{=}}
\newcommand{\distconv}{\xrightarrow[N \rightarrow \infty]{d}}
\newcommand{\rr}{\right}
\renewcommand{\ll}{\left}
\theoremstyle{plain} 
\newtheorem{theorem}{Theorem}[section]
\newtheorem*{theorem*}{Theorem}
\newtheorem{lemma}[theorem]{Lemma}
\newtheorem*{lemma*}{Lemma}
\newtheorem{corollary}[theorem]{Corollary}
\newtheorem*{corollary*}{Corollary}
\newtheorem{proposition}[theorem]{Proposition}
\newtheorem*{proposition*}{Proposition}
\newtheorem{fact}[theorem]{Fact}
\newtheorem*{fact*}{Fact}
\newtheorem{definition}[theorem]{Definition}
\newtheorem*{definition*}{Definition}
\newtheorem*{example*}{Example}
\newtheorem*{remark*}{Remark}
\newtheorem*{remarks*}{Remarks}
\renewcommand{\section}{\@startsection
{section}
{1}
{0mm}
{-2\baselineskip}
{1\baselineskip}
{\normalfont\large\scshape\centering}} 
\renewcommand{\subsection}{\@startsection
{subsection}
{2}
{0mm}
{-\baselineskip}
{1 \baselineskip}
{\normalfont\scshape}} 
\renewcommand{\rm}{\mathrm}
\def\@empty{}
\def\author#1{\par
    {\centering{\authorfont#1}\par\vspace*{0.05in}}}
\def\titlefont{\fontsize{12}{15} \centering{}}
\def\authorfont{\fontsize{12}{15}}
\let\affiliationfont\rhfont
\def\address#1{\par
    {\centering{\affiliationfont#1\par}}\par\vspace*{11pt}
}
\def\keywords#1{\par
    \vspace*{8pt}
    {\authorfont{\leftskip18pt\rightskip\leftskip
    \noindent{\it\small{Keywords}}\/:\ #1\par}}\vskip-12pt}
\def\body{
\setcounter{footnote}{0}
\def\thefootnote{\alph{footnote}}
\def\@makefnmark{{$^{\rm \@thefnmark}$}}
}
\def\title#1{ 
    \thispagestyle{plain}
    \vspace*{-14pt}
    \vskip 79pt
    {\centering{\titlefont #1\par}}%
    \vskip 1em
}
\begin{document}

~\vspace{-2cm}

\title{\textsc{Symmetries of the Quaternionic Ginibre Ensemble}}

\vspace{0.3cm}
\author{Guillaume Dubach}
\address{Courant Institute, New York University \\
dubach@cims.nyu.edu}
\vspace{0.3cm}

\begin{abstract}
We establish a few properties of eigenvalues and eigenvectors of the quaternionic Ginibre ensemble (QGE), analogous to what is known in the complex Ginibre case (see \cite{BourgadeDubach, Dubach1, Fyodorov}). We first recover a version of Kostlan's theorem that was already noticed by Rider \cite{RiderQGE}: the set of the squared radii of the eigenvalues is distributed as a set of independent gamma variables. Our proof technique uses the De Bruijn identity and properties of Pfaffians; it also allows to prove that the high powers of these eigenvalues are independent. These results extend to any potential beyond the Gaussian case, as long as radial symmetry holds; this includes for instance truncations of quaternionic unitary matrices, products of quaternionic Ginibre matrices, and the quaternionic spherical ensemble. \medskip

We then study the eigenvectors of quaternionic Ginibre matrices. The angle between eigenvectors and the matrix of overlaps both exhibit some specific features that can be compared to the complex case. In particular, we compute the distribution and the limit of the diagonal overlap associated to an eigenvalue that is conditioned to be zero.
\end{abstract}

\keywords{Non-hermitian random matrices, Quaternions, Pfaffians, Eigenvectors, Matrix of overlaps.}
\vspace{.5cm}

\tableofcontents
\newpage

\section{Introduction}

In his 1965 seminal paper \cite{Ginibre}, Jean Ginibre studied three ensembles of Gaussian random matrices now known to us as the real, complex and quaternionic Ginibre ensembles. He did so by order of complexity: firstly, the complex case, for which he could derive all correlation functions and give a first proof of convergence to the circular law; secondly the quaternionic case, for which he was able to compute the joint density of eigenvalues. The real Ginibre ensemble, though in a way the most natural of the three, has in fact a less tractable structure. Accordingly, the purpose of this note is to establish, for the quaternionic Ginibre ensemble (QGE), a version of what is known for its complex counterpart, but not yet for the real one. \medskip

\subsection{Quaternions and quaternionic matrices}

Although we first recall a few facts about quaternions, the reader should be aware that the techniques used in Section \ref{EigenvaluesSection} only rely on the knowledge of the joint distribution of eigenvalues (\ref{density}). In the same way, the results concerning eigenvectors in Section \ref{EigenvectorsSection} only rely on the distribution of the Schur transform, recalled as Theorem \ref{QuatSch}. \medskip

Hamilton's quaternions (also called real quaternions, as opposed to complex quaternions or biquaternions) form a noncommutative division algebra $\H$ whose elements can be written
$$
q = a+bi +cj +dk
$$
where $a,b,c,d \in \R$ and $i,j,k$ follow the relations 
$$
i^2 = j^2 = k^2 = -1, \quad
ij=-ji=k, \quad
jk=-kj=i, \quad
ki=-ik=j, 
$$
the real numbers being the center of the algebra. If $A \in \mathscr{M}_{N}(\H)$ is a matrix of quaternions, we say that $\lambda \in \H$ is an eigenvalue of $A$ if there exists a vector $X \in \H^N \backslash \{{\bf 0}\}$ such that
$$
AX=X \lambda
$$
where the side chosen for multiplication is of great importance, as it ensures in particular that $\lambda^k$ is an eigenvalue of $A^k$. We will denote $\Spec (A)$ the set of such eigenvalues.

\begin{fact}
If $\lambda \in \Spec (A)$, then for any $q \in \H^*$, $q^{-1} \lambda q \in \Spec (A)$.
\end{fact}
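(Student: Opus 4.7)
The plan is to exhibit an explicit eigenvector for $q^{-1}\lambda q$ built from the one for $\lambda$. Starting from the hypothesis $AX = X\lambda$ with $X \in \H^N \setminus \{\mathbf{0}\}$, I would set $Y := Xq$, where the quaternion $q$ acts on $X$ componentwise on the right. The computation is then just a matter of moving $q$ past itself:
$$
AY \;=\; A(Xq) \;=\; (AX)q \;=\; (X\lambda)q \;=\; Xq\,(q^{-1}\lambda q) \;=\; Y(q^{-1}\lambda q),
$$
which is exactly the right-eigenvalue equation for $q^{-1}\lambda q$. The only subtle point is that $A$ acts on the left, so it commutes freely with the right action of the scalar $q$; this is precisely the reason that the side of the multiplication in the definition $AX=X\lambda$ matters.

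It remains to verify that $Y = Xq$ is nonzero. Since $X \neq \mathbf{0}$, some component $x_i \in \H$ is nonzero; then $x_i q \neq 0$ because $\H$ is a division algebra, so $Y \neq \mathbf{0}$ and $q^{-1}\lambda q \in \Spec(A)$.

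There is no real obstacle in this proof; the statement is essentially a reformulation of the fact that eigenvectors of $A$ form a right $\H$-module, together with the division-algebra property of $\H$. The conceptual content to highlight is that eigenvalues of quaternionic matrices are naturally defined up to conjugation by a nonzero quaternion, which is why one usually identifies them with points in $\C$ lying in the upper half-plane (a canonical representative of each conjugacy class).
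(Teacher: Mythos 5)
Your proof is correct and takes exactly the same route as the paper: form $Y = Xq$ and verify $AY = Y(q^{-1}\lambda q)$ by associativity. You additionally spell out the (omitted in the paper) observation that $Y \neq \mathbf{0}$ because $\H$ is a division algebra, which is a nice completeness touch.
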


The proof of this fact is elementary: if $X$ is an eigenvector for $A$, then $Xq$ is an eigenvector for $q^{-1} \lambda q$, as
$$
A (Xq) = (X \lambda)q = X q (q^{-1} \lambda q),
$$
where we only used associativity of quaternions. This makes it clear that, rather than considering eigenvalues, it is more appropriate to consider classes of eigenvalues. We denote by $C_\la$ the conjugacy class of $\la$ in $\H$.

\begin{fact}
If $\la \in \R$, then $C_{\la}=\{ \la \}$. If $\la \notin \R$, then $C_{\la}$ is a two-dimensional sphere.
\end{fact}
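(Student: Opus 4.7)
The first claim is immediate: the real numbers are central in $\H$, so for $\lambda \in \R$ and any $q \in \H^*$,
$$
q^{-1} \lambda q = \lambda q^{-1} q = \lambda,
$$
which yields $C_\lambda = \{\lambda\}$.

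For $\lambda \notin \R$, the plan is to realize $C_\lambda$ as the orbit of $\lambda$ under the conjugation action of the group of unit quaternions $S^3 = \{u \in \H : |u|=1\}$. Writing any $q \in \H^*$ as $q = |q| \cdot u$ with $u \in S^3$, conjugation by $q$ reduces to conjugation by $u$. First I would identify the invariants: the real part $\mathrm{Re}(\lambda) = \tfrac12(\lambda + \bar\lambda)$ and the modulus $|\lambda|^2 = \lambda \bar\lambda$ are both preserved under conjugation by any $u \in S^3$ (using $\overline{u^{-1} \lambda u} = u^{-1} \bar\lambda u$). Decomposing $\lambda = a + \bm{v}$ with $a \in \R$ and $\bm{v} \in \mathrm{Im}(\H) \setminus \{0\}$, this shows that $C_\lambda$ is contained in the 2-sphere $\{a + \bm{w} : \bm{w} \in \mathrm{Im}(\H),\ |\bm{w}| = |\bm{v}|\}$.

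For the reverse inclusion, I would invoke the classical fact that the conjugation action of $S^3$ on the three-dimensional real vector space $\mathrm{Im}(\H) \cong \R^3$ factors through the double cover $S^3 \to SO(3)$ and is surjective onto $SO(3)$. Since $SO(3)$ acts transitively on centered spheres of $\R^3$, every $\bm{w}$ with $|\bm{w}| = |\bm{v}|$ occurs as $u^{-1} \bm{v} u$ for some $u \in S^3$, so every point of the sphere above lies in $C_\lambda$.

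The only step with real content is the surjectivity onto $SO(3)$; this is the main, but routine, obstacle. If one prefers to avoid quoting it, it suffices to observe that for a unit pure imaginary $\bm{n}$ and $\theta \in \R$, the element $u = \cos\theta + \bm{n}\sin\theta \in S^3$ acts on $\mathrm{Im}(\H)$ as the rotation of angle $2\theta$ about the axis spanned by $\bm{n}$, and such rotations generate $SO(3)$.
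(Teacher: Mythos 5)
Your proof is correct. The paper itself offers essentially no argument here — it simply declares the result ``a straightforward computation'' and writes down the sphere $S(a,r)$ — so there is no competing method to compare against in detail. What you supply is a clean, conceptual way to carry out that computation: you reduce to conjugation by the unit group $S^3$, you identify $\mathrm{Re}(\lambda)$ and $|\lambda|$ as conjugation invariants (which gives the inclusion $C_\lambda \subseteq S(a,r)$), and you obtain the reverse inclusion from the surjectivity of the adjoint map $S^3 \to SO(3)$ together with transitivity of $SO(3)$ on centered spheres in $\mathrm{Im}(\H) \cong \R^3$. The alternative you offer — directly checking that $u=\cos\theta+\boldsymbol{n}\sin\theta$ rotates $\mathrm{Im}(\H)$ by $2\theta$ about $\boldsymbol{n}$, and that such rotations generate $SO(3)$ — is exactly the ``straightforward computation'' the paper alludes to, so your write-up both matches the paper's intent and makes the underlying group-theoretic structure explicit. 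No gaps.
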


The proof of this fact is a straightforward computation. If $\la = a + b i+ cj + dk$, then denoting $r^2=b^2+c^2+d^2$,
$$
C_{\la} =\{ a + \beta i+ \gamma j + \delta k \ | \ \beta^2 + \gamma^2 + \delta^2 = r^2 \} = S(a,r).
$$ 
Note that, although this sphere is centered on the real axis, it doesn't intersect it, unless it is reduced to one point. This is no paradox, as we are considering a surface of dimension $2$ in a space of dimension $4$. In the same way, eigenvectors associated to a sphere of eigenvalues lie in a subspace isomorphic to $\H$. The relevance of this fact will appear more clearly when we consider 'angles' between eigenvectors in Subsection \ref{AngleSection}. \medskip

Other general facts about quaternions, quaternionic matrices, quaternionic hermitianity and quaternionic determinants can be found in the literature (see for instance \cite{Aslaksen, Lee}). We will now focus on the quaternionic random matrix ensemble first considered by Ginibre.

\subsection{Quaternionic Ginibre Ensemble}

Let $G$ be a matrix from the quaternionic Ginibre ensemble (QGE). This means that every quaternionic coefficient $q_{i,j}$ of the matrix is chosen with i.i.d. real Gaussian coordinates $ a_{i,j}, b_{i,j}, c_{i,j}, d_{i,j}$. With probability $1$, $\Spec (G)$ consists of $N$ spheres that do not intersect the real axis. Nevertheless, we can speak of this spectrum as a point process in $\C$: the reason is that every such sphere intersects the complex plane twice, and these two points are complex conjugates. \medskip

As the point processes that we consider in this note are symmetric with respect to the real axis, it will be convenient to denote by $\Psi$ the conjugate duplication of a set of points, namely, for any set $A$ of complex numbers,
$$ \Psi(A) = A \cup \bar{A}.$$

We also define the Gaussian reference measure as
$$\dd \mu (z) = \frac{1}{\pi} e^{-|z|^2} \dd m(z)$$
where $\dd m$ is the Lebesgue measure on $\C$. We shall also refer to this as the standard complex Gaussian, $\mathscr{N}_{\C}(0,1)$. \medskip

With these notations, the eigenvalues of the quaternionic Ginibre ensemble, first computed in \cite{Ginibre}, are given by $\Psi \left( \{ \lambda_i\}_{i=1}^N \right)$, where the distribution of $\bla= (\lambda_1, \dots, \lambda_N) \in \mathbb{C}^N$ is given by
\begin{equation}\label{density}
\frac{1}{Z_N} \prod_{i<j} |\la_i-\la_j|^2 \prod_{i\leq j} |\la_i-\barla_j|^2 \dd \mu^{N}(\bla)
\end{equation}
with $Z_N = 2^N N! \prod_{i=1}^N (2i-1)!$. All eigenvalues are distinct with probability one. 
Convergence to the circular law after scaling by $\sqrt{2N}$ has been proved in \cite{Benaych}. We will refer to the eigenvalues given by the density $(\ref{density})$ as 'unscaled'. For some results however, it will make more sense to consider properly scaled eigenvalues.

\subsection{Old and new results}

All results established here for the quaternionic Ginibre ensemble have a known analog in the complex Ginibre case. This is summarized in the synoptic table below, with comments on the next page.

\vspace{0.4cm}

{
\begin{tabular}{|l|c|c|}  
  \cline{2-3}
\multicolumn{1}{c|}{} &  &  \\
\multicolumn{1}{c|}{} & Complex Ginibre Ensemble  & Quaternionic Ginibre Ensemble \\
\multicolumn{1}{c|}{} &  &   \\
  \hline
 &  &  \\
  & Independence & Independence \\
 Squared Radii & $\{ |\lambda_k|^2 \}_{k=1}^N \stackrel{d}{=} \{ \gamma_k \} _{k=1}^N$ & $ \{ |\lambda_k|^2 \}_{k=1}^N \stackrel{d}{=} \{ \gamma_{2k} \} _{k=1}^N $ \\
    &  &  \\
  & (Kostlan \cite{Kost}) & (Rider \cite{RiderQGE} and Theorem \ref{KostlanQuat}) \\
   &  &  \\
 \hline
  &  &  \\
  & Independence for $M \geq N$ & Independence for $M \geq 2N$ \\
High Powers & $\{ \lambda_k^M \}_{k=1}^N \stackrel{d}{=} \{ Z_k \} _{k=1}^N$ & $ \Psi \left( \{ \lambda_k^M \}_{k=1}^N \right) \stackrel{d}{=} \Psi\left( \{ Z_k \} _{k=1}^N \right)$  \\
  & with $(Z_k)_{k=1}^N$ independent. & with $(Z_k)_{k=1}^N$ independent. \\
     &  &  \\
    & (Hough \& al. \cite{HKPV})  & (Theorem \ref{HighPowers}) \\
   &  &  \\
   \hline
      &  &  \\
Angle between & $ \frac{\langle R_1 | R_2 \rangle }{\| R_1\| \| R_2 \|} \disteq \phi\ll( \frac{X}{\la_1 - \la_2} ,0 \rr)$ & $ \frac{\langle R_1 | R_2 \rangle }{\| R_1\| \| R_2 \|}  \disteq \phi\left( \frac{X}{\la_1 - \la_2} ,\frac{Y}{\barla_1 - \la_2} \right)$ \\
  eigenvectors    & where $X \sim \mathscr{N}_{\C}(0,1)$ & where $X,Y$ are i.i.d. $\mathscr{N}_{\C}(0,1)$ \\
   &  &  \\
      & (\cite{BenZeitouni}, Appendix B in \cite{BourgadeDubach}) & (Proposition \ref{Angledistrib}) \\
      &  &  \\
   \hline
      &  &  \\
Distribution of & $\Ov_{1,1} \disteq \prod_{k=2}^N \ll( 1+ \frac{|X_k|^2}{|\la_1 - \la_k|^2} \rr) $ & $\Ov_{1,1} \disteq \prod_{k=2}^N \ll( 1+ \frac{|X_k|^2}{|\la_1 - \la_k|^2} + \frac{|Y_k|^2}{|\la_1 - \barla_k|^2} \rr) $ \\
   diagonal overlaps     & $X_k$ are i.i.d. $\mathscr{N}_{\C}(0,1)$ & $X_k, Y_k$ are i.i.d. $\mathscr{N}_{\C}(0,1)$ \\
& (Theorem 2.2 in \cite{BourgadeDubach}) & (Theorem \ref{diagoverlap})  \\
      &  &  \\
      \hline
       &  &  \\
 Limit of overlaps  & $\frac{1}{N} \Ov_{1,1} \distconv \ga_2^{-1}$ & $\frac{1}{N} \Ov_{1,1} \distconv ( \frac{1}{2} \ga_4)^{-1}$ \\
   at $\la_1=0$   & (Proposition 2.4 in \cite{BourgadeDubach}) & (Theorem \ref{Gamma4ever})  \\
     &  &  \\
       &  &  \\
   \hline
\end{tabular}
}

\newpage

\begin{itemize}

\item The first result, Theorem \ref{KostlanQuat}, is the analog of Kostlan's theorem: the square radii of the eigenvalues are distributed, as a set of independent (but not i.i.d.) gamma variables. The same is true for QGE, with different parameters. These radii of pairs of complex numbers correspond to the distance to the origin for the whole spheres of quaternionic eigenvalues. \medskip

Note that, for this theorem as well as the next one, it is not technically true to say that the concerned statistics are independent; what we prove is the identity between two point processes, one of them involving independent points that are not identically distributed.  \medskip

\item The second result, Theorem \ref{HighPowers} is that high powers of QGE, as well as high powers of complex Ginibre eigenvalues, become distributed as a set of independent variables. This is not an asymptotic result, but an exact decomposition, as soon as the power reaches a certain level (corresponding, in both cases, to the number of points involved). \medskip

\item The third section of the paper is devoted to properties of eigenvectors, accessible through the structure of the Schur transform, which is triangular but has essentially the same eigenstatistics as the original quaternionic matrix. The first natural question that can be asked is how the angle between two given eigenvectors is distributed. It is also straightforward to answer, as this angle depends only on the first few coefficients of the Schur transform. The function $\phi$ that appears in the synoptic table maps pairs of complex numbers to the open unit disk in the following way:
\begin{align*}
\phi: \quad \C^2 & \rightarrow \D \\
 (z,w) & \mapsto \frac{z}{\sqrt{1+|z|^2+|w|^2}}.
\end{align*}
The essential feature of the distribution of angles is that, as in the complex case, eigenvectors corresponding to eigenvalues that are at least mesoscopically separated tend to be close to orthogonal.

\item The last two rows of the synoptic table concern the diagonal elements of the matrix of overlaps, defined in the beginning of Subsection \ref{OverlapSection}. Overlaps are quantities involving both left and right eigenvectors of the matrix, that turn out to be more relevant, in many cases, than the angles between eigenvectors. In a sense, they quantify the non-normality of the matrix, as well as its sensitivity to perturbations. For this reason, diagonal overlaps are sometimes referred to as the eigenvalue condition numbers. The distribution and correlations of overlaps in general are hard to compute, but one useful fact is that, for QGE as well as for complex Ginibre, the diagonal overlap is distributed like a product of independent terms involving i.i.d. Gaussian variables and the eigenvalues. This equality in distribution, together with Theorem \ref{KostlanQuat}, leads to a limit theorem when conditioning on $\{ \la_1 = 0\}$. Extending this result to other values of $\la_1$ is a natural question that we leave aside for now.

\end{itemize}
\newpage
\section{Eigenvalues of QGE}\label{EigenvaluesSection}

In this section, after briefly recalling the properties of Pfaffians, we establish two results for the eigenvalues of the quaternionic Ginibre ensemble, analogous to what is know for the complex Ginibre ensemble. 

\subsection{Pfaffians and De Bruijn formula}

We define the Pfaffian of a $2N \times 2N$ matrix A as
$$
\Pf A = \frac{1}{2^N N!} \sum_{\sigma \in \mathfrak{S}_{2N}} \epsilon(\sigma) \prod_{i=1}^N A_{\sigma(2i),\sigma(2i-1)}
$$

Note that the definition holds, whether $A$ is skew-symmetric or not, but one can always bring it down to the the Pfaffian of a skew-symmetric matrix by using the equality

\begin{fact}\label{skewsym} For any $A \in \mathscr{M}_{2N}$,
$$
\Pf (A) = \Pf \left( \frac{A-A^T}{2} \right).
$$
\end{fact}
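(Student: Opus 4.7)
The plan is to expand the right-hand side directly from the definition of the Pfaffian and reorganize the sum into $\Pf(A)$. Set $B := (A - A^T)/2$, so that $B_{ij} = \tfrac{1}{2}(A_{ij} - A_{ji})$.

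First, I would substitute this into the defining formula for $\Pf(B)$ and expand the product over $i$ into $2^N$ terms, indexed by subsets $S \subseteq \{1,\dots,N\}$ recording which pairs of indices have been ``swapped'':
$$
\prod_{i=1}^N B_{\sigma(2i),\sigma(2i-1)} = \frac{1}{2^N} \sum_{S \subseteq \{1,\dots,N\}} (-1)^{|S|} \prod_{i \notin S} A_{\sigma(2i),\sigma(2i-1)} \prod_{i \in S} A_{\sigma(2i-1),\sigma(2i)}.
$$

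Second, for each subset $S$, I would interpret this swap as a change of permutation. Let $\tau_S \in \mathfrak{S}_{2N}$ be the involution that exchanges $2i-1 \leftrightarrow 2i$ for each $i \in S$; it is a product of $|S|$ disjoint transpositions, so $\epsilon(\sigma \tau_S) = (-1)^{|S|} \epsilon(\sigma)$. Setting $\sigma' = \sigma \tau_S$, one checks directly that
$$
\prod_{i \notin S} A_{\sigma(2i),\sigma(2i-1)} \prod_{i\in S} A_{\sigma(2i-1),\sigma(2i)} = \prod_{i=1}^N A_{\sigma'(2i),\sigma'(2i-1)},
$$
so the sign $(-1)^{|S|}$ from the expansion is exactly absorbed into $\epsilon(\sigma')$.

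Finally, I would swap the order of summation: for each fixed $S$, the map $\sigma \mapsto \sigma \tau_S$ is a bijection of $\mathfrak{S}_{2N}$, so the sum over $\sigma$ produces $2^N N! \,\Pf(A)$. Summing over the $2^N$ subsets yields a factor $2^N$ that cancels the $2^{-N}$ coming from the expansion, and the outer $(2^N N!)^{-1}$ then matches the definition of $\Pf(A)$. The argument is pure bookkeeping; the only point to watch is that the combinatorial factors $2^N$, $2^{-N}$, and $N!$ cancel exactly, and no real obstacle arises. Alternatively, one could observe that both sides of the identity are polynomials in the entries of $A$, that they coincide when $A$ is skew-symmetric, and that the expansion above is precisely the device that reduces the general case to the skew-symmetric one.
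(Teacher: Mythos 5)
Your proof is correct and complete: the expansion of the product over subsets $S$, the reinterpretation of each swapped factor via the involution $\tau_S$ with $\epsilon(\tau_S)=(-1)^{|S|}$, and the bijection $\sigma\mapsto\sigma\tau_S$ for each fixed $S$ all check out, and the combinatorial factors $2^{-N}\cdot 2^{-N}N!^{-1}\cdot 2^N\cdot 2^N N!$ do cancel to give exactly $\Pf(A)$. Note that the paper does not actually prove this fact; it records it as a known identity and refers to \cite{DeBruijn, Deift1}, so there is no in-paper argument to compare against, but the direct expansion you give is the standard one. The closing remark about reducing to the skew-symmetric case is not really a second proof (the reduction \emph{is} the expansion you already did), but it does no harm.
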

We will also use the following elementary facts.
\begin{fact}\label{basics} For any $A,B \in \mathscr{M}_{2N}(\C), M \in \mathscr{M}_{N}(\C)$,
$$
\Pf(BAB^t) = \det (B) \Pf (A), \qquad
\Pf
\left(
\begin{array}{cc}
0 & M \\
-M^t & 0
\end{array}
\right)
= (-1)^{\frac{N(N-1)}{2}} \det M.
$$
\end{fact}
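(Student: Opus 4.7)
The plan is to prove each identity via the determinant--squaring relation $\Pf(A)^2 = \det(A)$ (valid for skew-symmetric $A$), which pins down each Pfaffian up to a sign, and then to fix the sign by a connectedness/continuity argument, checking a single explicit evaluation.

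For the first identity, I would first reduce to the skew-symmetric case using Fact \ref{skewsym}: if $A' = (A-A^t)/2$, then $BA'B^t = (BAB^t - (BAB^t)^t)/2$, so both sides are unchanged when $A$ is replaced by $A'$, and $BA'B^t$ is again skew-symmetric. Squaring then gives
$$
\Pf(BA'B^t)^2 = \det(BA'B^t) = \det(B)^2 \det(A') = \bigl(\det(B)\,\Pf(A')\bigr)^2,
$$
so that $\Pf(BA'B^t) = \pm \det(B)\,\Pf(A')$ as polynomials in the entries of $A'$ and $B$. Since both sides are continuous in $B$ and $GL_{2N}(\C)$ is connected, the sign is globally constant, and setting $B = I$ shows it is $+1$. (Alternatively, one can identify $\Pf(A)\,e_1 \wedge \cdots \wedge e_{2N}$ with $\tfrac{1}{N!}\omega^N$ in the exterior algebra, where $\omega$ is the skew form associated to $A$; a linear change of variables by $B$ gives the identity directly with no sign ambiguity.)

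For the second identity, I would apply the first identity with
$$
B = \begin{pmatrix} I_N & 0 \\ 0 & (M^{-1})^t \end{pmatrix},
$$
initially assuming $M$ invertible (the general case follows by continuity, as both sides are polynomials in the entries of $M$). A short block computation gives
$$
B \begin{pmatrix} 0 & M \\ -M^t & 0 \end{pmatrix} B^t = \begin{pmatrix} 0 & I_N \\ -I_N & 0 \end{pmatrix},
$$
so that $\det(B)\,\Pf\begin{pmatrix} 0 & M \\ -M^t & 0 \end{pmatrix} = \Pf\begin{pmatrix} 0 & I_N \\ -I_N & 0 \end{pmatrix}$, and since $\det(B) = (\det M)^{-1}$ the claim reduces to an explicit evaluation of the right-hand side. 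From the definition, the only perfect matching with a nonzero contribution is $\{\{i, N+i\} : 1 \le i \le N\}$; the associated permutation $(1, N+1, 2, N+2, \ldots, N, 2N)$ has exactly $\binom{N}{2}$ inversions, yielding the sign $(-1)^{N(N-1)/2}$ and the stated formula.

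The main subtlety I expect is the sign bookkeeping: the squaring argument in the first step leaves a sign ambiguity that must be resolved either by a connectedness argument or by the exterior-algebra viewpoint, and the inversion count in the final evaluation, though elementary, requires care to get right.
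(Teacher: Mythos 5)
The paper does not prove this Fact; it simply states it as well known and refers the reader to de Bruijn and Deift--Gioev, so there is no in-paper argument to compare yours against. Your proof is correct, and both steps are the standard route: reduce to skew-symmetric $A$ via Fact~\ref{skewsym}, square and invoke $\Pf(A)^2 = \det A$, then eliminate the sign ambiguity. A small remark on the sign-fixing: the polynomial argument alone already suffices (since $\C[A'_{ij},B_{kl}]$ is an integral domain, $P^2=Q^2$ forces $P=Q$ or $P=-Q$ identically, and evaluating at $B=I$ decides), so the connectedness of $GL_{2N}(\C)$ is a redundant second crutch rather than an extra needed ingredient. The reduction of the second identity to $M=I_N$ via a block-diagonal $B$, followed by a density/continuity argument in $M$, is clean and correct, and your inversion count $\binom{N}{2}$ is right.

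One thing worth flagging, not as a gap in your argument but as a discrepancy you implicitly stepped around: the paper's displayed definition of the Pfaffian uses $\prod_{i} A_{\sigma(2i),\sigma(2i-1)}$, which for skew-symmetric $A$ differs by a factor $(-1)^N$ from the more common $\prod_{i} A_{\sigma(2i-1),\sigma(2i)}$. Your final evaluation (the unique nonzero matching gives $\prod A_{i,\,N+i}=1$, hence $\Pf = \epsilon(\sigma_0)=(-1)^{N(N-1)/2}$) uses the latter convention; with the paper's definition read literally one would get $(-1)^{N(N+1)/2}\det M$. Since the formula stated in the Fact is the standard one, the paper's index order in the definition of $\Pf$ is almost certainly a typo, and you have proved what was meant; but it would be worth a one-line remark, since the first identity you prove is convention-independent while the second is not.
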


The following non trivial proposition is the primary interest of the Pfaffian. 

\begin{proposition}\label{Muir} If $A$ is skew-symmetric, then 
$$
(\Pf A)^2=\det A
$$
\end{proposition}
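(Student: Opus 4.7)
My plan is to reduce $A$ to a canonical form via a congruence transformation and then apply the two facts stated above.

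As a preliminary observation, both $(\Pf A)^2$ and $\det A$ are polynomial functions of the $N(2N-1)$ independent entries $a_{ij}$, $i<j$, of a skew-symmetric matrix $A \in \mathscr{M}_{2N}(\C)$. Since polynomials are continuous and the set of non-singular skew-symmetric matrices is dense in the space of skew-symmetric matrices (the determinant, as a polynomial in the $a_{ij}$, is not identically zero, as witnessed by the standard $J$ below), it suffices to prove the identity when $A$ is non-singular.

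For non-singular skew-symmetric $A$, I would invoke the classical fact from linear algebra (provable by a symplectic analogue of Gram--Schmidt) that $A$ is congruent to the standard symplectic form: there exists $B \in GL_{2N}(\C)$ such that
$$
BAB^T = J := \begin{pmatrix} 0 & I_N \\ -I_N & 0 \end{pmatrix}.
$$
The second identity in Fact \ref{basics}, taken with $M = I_N$, gives $\Pf J = (-1)^{N(N-1)/2}$ and in particular $(\Pf J)^2 = 1 = \det J$, establishing the identity for $J$. To transfer back to $A$, use the first identity in Fact \ref{basics}:
$$
\Pf J = \det(B) \,\Pf A, \qquad \det J = \det(B)^2 \det A.
$$
Squaring the first relation and comparing with the second gives $(\det B)^2 (\Pf A)^2 = (\det B)^2 \det A$, and dividing by $(\det B)^2 \neq 0$ yields $(\Pf A)^2 = \det A$ on the non-singular set; the density reduction above completes the proof.

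The main technical input is the canonical form for non-degenerate skew-symmetric bilinear forms over $\C$, which I would simply cite from a standard linear algebra reference. If a self-contained argument is preferred, one can instead proceed by induction on $N$: using non-singularity, pick an entry $a_{ij} \neq 0$, perform elementary congruences (which preserve skew-symmetry) to block-diagonalize $A$ into a $2 \times 2$ block and a $(2N-2) \times (2N-2)$ skew block, and invoke the induction hypothesis together with an elementary Pfaffian cofactor expansion. Either way, the one delicate point is the block reduction step, which requires some careful bookkeeping to ensure the correct sign in the Pfaffian; all the other ingredients are immediate consequences of Facts~\ref{skewsym} and~\ref{basics}.
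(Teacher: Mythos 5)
The paper does not actually supply a proof of this proposition: it is stated as a well-known fact, and the text immediately after Proposition~\ref{DeBru} defers all proofs to the references \cite{DeBruijn, Deift1}. So there is no in-paper argument to compare against.

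That said, your proof is correct and is one of the two standard routes to this identity (the other being the exterior-algebra computation with $\omega_A = \sum_{i<j} a_{ij}\, e_i \wedge e_j$ and $\omega_A^{\wedge N} = N!\,(\Pf A)\, e_1 \wedge \cdots \wedge e_{2N}$). All the ingredients you use are legitimate: $(\Pf A)^2$ and $\det A$ are both polynomials in the $N(2N-1)$ free entries of a skew-symmetric $A$, the non-singular locus is dense, and the congruence $BAB^T = J$ for non-singular skew-symmetric $A$ over $\C$ is the classical symplectic Gram--Schmidt canonical form. The transfer step $\Pf(BAB^T) = \det(B)\,\Pf(A)$, $\det(BAB^T) = (\det B)^2 \det A$ then closes the argument. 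One point worth making explicit is that Fact~\ref{basics} itself must be established \emph{without} appealing to $(\Pf A)^2 = \det A$, lest the argument be circular; the usual exterior-algebra proof of $\Pf(BAB^T) = \det(B)\Pf(A)$ is indeed independent of the Cayley/Muir identity, so there is no circularity here, but a referee would want this acknowledged. Your alternative inductive sketch via a $2\times 2$ block split and Pfaffian cofactor expansion is also sound in outline, and correctly flags the sign bookkeeping as the delicate point.
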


The next identity, first established in \cite{DeBruijn}, is essential for our purpose, as it will play the same role here as did Andreief's identity in \cite{Dubach1}.

\begin{proposition}[De Bruijn]\label{DeBru}
Let $(E, \mathcal{E}, \nu)$ be a measure space. For any functions $(\phi_i, \psi_i)_{i=1}^{2N} \in L^2(\nu)^{4N}$,
$$
\int_{E^N} \det \left(  \phi_i(\lambda_j) \ | \ \psi_i (\lambda_{j}) \right) \ \dd \nu^{  N} ({\bla}) = N! 2^N \Pf \left(f_{i,j}\right)_{i,j=1}^{2N}, \quad \text{where} \ f_{i,j} = \int_E  \phi_i(\lambda) \psi_j(\lambda) \dd \nu( \lambda).
$$
\end{proposition}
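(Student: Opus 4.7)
The plan is to expand the determinant via Leibniz, integrate term-by-term with Fubini, and then reorganise the resulting sum over $\mathfrak{S}_{2N}$ as a sum indexed by perfect matchings of $\{1,\ldots,2N\}$, at which point the Pfaffian appears.

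First I would label the $2N$ columns of the integrand so that columns $2k-1$ and $2k$ are $(\phi_i(\la_k))_{1\le i\le 2N}$ and $(\psi_i(\la_k))_{1\le i\le 2N}$ respectively. Leibniz then gives
$$
\det\big(\phi_i(\la_j) \mid \psi_i(\la_j)\big) \;=\; \sum_{\sigma \in \mathfrak{S}_{2N}} \epsilon(\sigma) \prod_{k=1}^{N} \phi_{\sigma(2k-1)}(\la_k) \, \psi_{\sigma(2k)}(\la_k),
$$
and since factors labelled by distinct $k$ involve disjoint integration variables, Fubini yields
$$
\int_{E^N} \det\big(\phi_i(\la_j) \mid \psi_i(\la_j)\big) \, \dd \nu^N(\bla) \;=\; \sum_{\sigma \in \mathfrak{S}_{2N}} \epsilon(\sigma) \prod_{k=1}^{N} f_{\sigma(2k-1), \sigma(2k)}.
$$

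Next I would group permutations by the unordered matching $\{\{\sigma(2k-1),\sigma(2k)\} : 1\le k\le N\}$. Each matching has a fibre of $N!\cdot 2^N$ permutations: the $N!$ orderings of the pairs preserve both $\epsilon(\sigma)$ and the integrand (each pair-swap is a product of two transpositions), while the $2^N$ within-pair flips flip $\epsilon(\sigma)$ and swap the two indices of $f$. Summing the $2^N$ within-pair choices of a fixed matching $\{\{a_k,b_k\}:a_k<b_k\}$ therefore produces
$$
\epsilon_0 \prod_{k=1}^N \big(f_{a_k,b_k}-f_{b_k,a_k}\big) \;=\; 2^N \epsilon_0 \prod_{k=1}^N g_{a_k,b_k}, \qquad g := \tfrac{1}{2}(f-f^T),
$$
where $\epsilon_0$ is the sign of any base permutation producing that matching. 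This $\epsilon_0$ is precisely the combinatorial sign appearing in the matching expansion of the Pfaffian, so summing over matchings recognises the whole expression as $N!\,2^N\,\Pf(g)$, which equals $N!\,2^N\,\Pf(f)$ by Fact~\ref{skewsym}.

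The only real subtlety is the sign bookkeeping in the grouping step: verifying that pair-swaps preserve parity while within-pair swaps flip it, and identifying $\epsilon_0$ with the Pfaffian's matching sign. Once those are checked, the factor $2^N$ emerges automatically from the antisymmetrisation of $f$, and the identity follows.
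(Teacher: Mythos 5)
The paper does not actually prove this proposition; it cites de Bruijn's original paper and Deift--Gioev and moves on. So there is no ``paper's proof'' to compare against, and your argument should be judged on its own. Your approach --- Leibniz expansion, Fubini to replace each factor by $f_{\sigma(2k-1),\sigma(2k)}$, and regrouping the sum over $\mathfrak{S}_{2N}$ into the $2^N N!$-sized fibres over perfect matchings so that the antisymmetrisation $f \mapsto (f-f^T)/2$ appears --- is exactly the standard proof of de Bruijn's identity, and the combinatorics is right (pair-reorderings are even, within-pair flips are odd, and $(2N)! = (2N-1)!!\cdot 2^N N!$).

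Two small bookkeeping points deserve attention. First, the paper declares the determinant to be formed ``in two halves'': columns $1,\dots,N$ are $\phi_i(\la_j)$ and columns $N+1,\dots,2N$ are $\psi_i(\la_j)$. You silently switch to the interlaced ordering (columns $2k-1,2k$ are $\phi_i(\la_k),\psi_i(\la_k)$); this permutation of columns changes the determinant by $(-1)^{N(N-1)/2}$, which you should either record or justify as a convention change. Second, the paper's own Pfaffian convention is $\Pf A = \frac{1}{2^N N!}\sum_\sigma \epsilon(\sigma)\prod_i A_{\sigma(2i),\sigma(2i-1)}$, which on skew-symmetric matrices differs by $(-1)^N$ from the more common $A_{\sigma(2i-1),\sigma(2i)}$ convention that your ``matching expansion'' implicitly uses. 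Your chain of equalities therefore matches the paper's identity only up to these explicit signs; indeed one can check that the paper's own statement already fails at $N=1$ with its stated conventions ($\int \det = f_{1,2}-f_{2,1}$ while $2\Pf f = f_{2,1}-f_{1,2}$). None of this matters downstream, since the paper only ever uses $\Pf^2 = \det$, but in a standalone proof you should settle on one ordering and one Pfaffian convention and track the resulting sign all the way through rather than appealing to Fact~\ref{skewsym} at the very last step without verifying the sign it carries.
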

The bar in the determinant only means that we define the $2N \times 2N$ matrix in two halves, according to columns, so that the index $i$ goes from $1$ to $2N$ once, and the index $j$ goes from $1$ to $N$ twice. \medskip

The results quoted above are well-known. Proofs and comments can be found in \cite{DeBruijn, Deift1}. De Bruijn's identity gives a formula for the product statistics of QGE.

\begin{corollary}[Product Statistics]\label{ProStat} Let $g \in L^2(\mu)$ such that $g(\lambda)=g(\bar{\lambda})$, and  $\Psi \left( \{ \lambda_1, \dots, \lambda_N\} \right)$ be unscaled quaternionic Ginibre eigenvalues. Then,
$$ \E \left( \prod_{k=1}^N g \left(\lambda_k \right) \right)= \frac{1}{\prod_{i=1}^N (2i-1)!} \Pf \left( f_{i,j} \right)_{i,j = 1}^{2N} \quad \text{where} \ f_{i,j} = \int_{\mathbb{C}} \left(z^{i} \bar{z}^{j-1} - z^{i-1} \bar{z}^{j} \right) g(z) \dd \mu ( z).$$ 
\end{corollary}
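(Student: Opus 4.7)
The plan is to apply De~Bruijn's identity (Proposition~\ref{DeBru}) to a determinantal rewriting of
$$
P(\bla) := \prod_{i<j}|\la_i-\la_j|^2 \prod_{i\le j}|\la_i-\barla_j|^2.
$$
The first step is an algebraic identity. Expanding the Vandermonde determinant $V$ in the $2N$ variables $(\la_1,\barla_1,\ldots,\la_N,\barla_N)$ and regrouping factors pair by pair gives
$$
V = \prod_{i=1}^N(\barla_i-\la_i)\,\prod_{1\le i<j\le N}|\la_i-\la_j|^2\,|\la_i-\barla_j|^2,
$$
which, using $|\la_i-\barla_i|^2=(\la_i-\barla_i)(\barla_i-\la_i)$, yields the key identity $P = V\cdot\prod_{i=1}^N(\la_i-\barla_i)$.

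Next, I would recognize this as a De~Bruijn-type determinant. Setting
$$
\phi_i(z) := z^{i-1}(z-\bar z), \qquad \psi_i(z):=\bar z^{i-1},\quad i=1,\ldots,2N,
$$
the extra factor $(\la_j-\barla_j)$ is absorbed into each $\la_j$-column of $V$, so that after a fixed column permutation (whose sign depends only on $N$) one has
$$
V\cdot\prod_{i=1}^N(\la_i-\barla_i) = \pm\det\bigl(\phi_i(\la_j)\mid\psi_i(\la_j)\bigr).
$$
Absorbing $g$ into the reference measure by $\dd\nu := g\,\dd\mu$ and invoking Proposition~\ref{DeBru}, this yields
$$
\int \prod_{k=1}^N g(\la_k)\,P(\bla)\,\dd\mu^N(\bla) = \pm\, N!\,2^N\,\Pf(f_{i,j}),
$$
with $f_{i,j} = \int \phi_i(z)\psi_j(z)\,g(z)\,\dd\mu(z) = \int(z^i\bar z^{j-1}-z^{i-1}\bar z^j)\,g(z)\,\dd\mu(z)$, exactly matching the formula in the corollary. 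Dividing by $Z_N = 2^N N!\prod_{i=1}^N(2i-1)!$ gives the stated identity, with the overall sign pinned down by specializing $g\equiv 1$ and matching the known normalization.

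The hypothesis $g(\la)=g(\barla)$ enters at the end: a change of variable $z\mapsto\bar z$, which preserves both $\dd\mu$ and $g$, shows $f_{j,i}=-f_{i,j}$, so that $(f_{i,j})$ is genuinely skew-symmetric and $\Pf$ is defined in the classical sense (compatibly with Fact~\ref{skewsym}). The main non-routine ingredient is the identity $P = V\cdot\prod_i(\la_i-\barla_i)$ relating the density to a Vandermonde; once this is in hand, the Pfaffian structure drops out of De~Bruijn applied to the explicit $\phi_i,\psi_i$ above.
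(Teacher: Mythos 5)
Your proof is correct and takes essentially the same route as the paper: both rewrite the eigenvalue density as a $2N\times 2N$ Vandermonde determinant in $(\la_1,\barla_1,\dots,\la_N,\barla_N)$ times $\prod_i(\barla_i-\la_i)$ and then apply De~Bruijn's identity. The only (cosmetic) difference is that you absorb the extra factor $(z-\bar z)$ into the functions $\phi_i$, whereas the paper absorbs it into the reference measure $\dd\nu=(\bar z - z)g\,\dd\mu$ and keeps $\phi_i(z)=z^{i-1}$, $\psi_j(z)=\bar z^{j-1}$.
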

\begin{proof} We first note that the density (\ref{density}) can essentially be written as one $2N \times 2N$ Vandermonde determinant. Indeed,
$$
\prod_{i<j} |z_i-z_j|^2 \prod_{i\leq j} |z_i-\bar{z_j}|^2
=
\det \left( z_i^{j-1} \ | \ \overline{z_i}^{j-1} \right)
\prod_{i=1}^N (\overline{z}_i - z_i).
$$
Using De Bruijn formula \ref{DeBru} with $E=\mathbb{C}$, $\phi_j(z)=z^{i-1}$, $\psi_j(z)=\bar{z}^{j-1}$  and the complex measure $\dd \nu(z) = (\bar{z}-z) g(z) \dd \mu(z)$ yields the result.
\end{proof}

It is known that, for any suitable reference measure beyond the Gaussian case, such statistics where $g$ is a polynomial in $\la, \barla$ characterize the distribution of the point process. A detailed argument is provided in \cite{Dubach1}.

\subsection{Radii of QGE}

The square radii of these complex eigenvalues are relevant to the quaternionic eigenvalues: they are the radii of the spheres of eigenvalues in $\mathbb{H}$. \medskip

The following theorem is the analog of Kostlan's independence theorem in the complex case. It was proved by Rider in \cite{RiderQGE} with different techniques, and used to establish a Gumbel limit for the fluctuations of the largest radius of the eigenvalues. The same could be done with a more general reference measure, assuming some regularity, using the methods of \cite{ChafaiPeche}.

\begin{theorem}[Rider]\label{KostlanQuat} If $\Psi \left( \{ \lambda_i \}_{i=1}^N \right)$ is the (unscaled) spectrum of $G$, then
$$\{ |\lambda_i|^2 \}_{i=1}^N \stackrel{d}{=} \{ X_i \}_{i=1}^N$$
where the $X_i$ are independent, and $X_i \sim \gamma(2i)$.
\end{theorem}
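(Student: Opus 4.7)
The plan is to identify the law of the unordered set $\{|\la_k|^2\}_{k=1}^N$ by matching its product statistics with those of an independent family $X_k \sim \gamma(2k)$. Concretely, I want to establish that for every bounded radial test function $f$,
$$
\E \prod_{k=1}^N f(|\la_k|^2) = \prod_{k=1}^N \E f(X_k),
$$
which, by the density remark recorded just after Corollary \ref{ProStat}, determines the distribution of the radial point process and hence gives the claim.

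Setting $g(z) = f(|z|^2)$ in Corollary \ref{ProStat} rewrites the left-hand side as $\frac{1}{\prod_{i=1}^N(2i-1)!}\,\Pf(f_{i,j})_{i,j=1}^{2N}$ with
$$
f_{i,j} = \int_\C \ll( z^{i}\bar z^{j-1} - z^{i-1}\bar z^{j} \rr) f(|z|^2)\,\dd\mu(z).
$$
Passing to polar coordinates and integrating out the angle, every monomial $z^p\bar z^q$ with $p\neq q$ vanishes, so $f_{i,j}=0$ unless $|i-j|=1$. For the surviving entries, the substitution $t=r^2$ gives $f_{i,i+1} = -f_{i+1,i} = \int_0^\infty t^{i} f(t) e^{-t}\,\dd t$, so $(f_{i,j})_{i,j=1}^{2N}$ is tridiagonal and skew-symmetric.

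The crux is then computing the Pfaffian of this tridiagonal matrix. In the Pfaffian sum over perfect matchings of $\{1,\dots,2N\}$, only those matchings pairing exclusively adjacent indices avoid a vanishing entry; and there is exactly one such matching, namely $\{(1,2),(3,4),\dots,(2N-1,2N)\}$. Therefore
$$
\Pf(f_{i,j})_{i,j=1}^{2N} = \prod_{k=1}^N f_{2k-1,2k} = \prod_{k=1}^N \int_0^\infty t^{2k-1} f(t) e^{-t}\,\dd t,
$$
and dividing by $\prod_{k=1}^N(2k-1)!$ recognizes $\frac{t^{2k-1}e^{-t}}{(2k-1)!}$ as the density of $\gamma(2k)$, so the right-hand side equals $\prod_{k=1}^N \E f(X_k)$, as required.

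I do not foresee a serious obstacle: the polar decomposition is a routine angular-integration exercise, and the tridiagonal Pfaffian is immediate once one notes that no matching containing a non-adjacent pair can contribute. The only mild conceptual point is the passage from product statistics to equality in distribution of unordered sets, but this is already encoded in the framework of Corollary \ref{ProStat} and the comment which follows it.
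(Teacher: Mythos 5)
Your argument is correct and has the same overall architecture as the paper's: apply Corollary \ref{ProStat} to a radial test function, observe that angular integration kills all but the super- and sub-diagonal entries so that $(f_{i,j})$ is tridiagonal skew-symmetric, evaluate the resulting Pfaffian as $\prod_{k=1}^N f_{2k-1,2k}$, and recognize each factor (after normalization) as $\E f(\gamma_{2k})$. The one genuine point of divergence is how you evaluate the Pfaffian of the tridiagonal matrix. The paper invokes Proposition \ref{Muir} ($\Pf^2 = \det$) and then uses the known fact that the determinant of a tridiagonal skew-symmetric matrix of even size equals $\prod_{k} b_{2k-1}^2$, leaving a harmless but unaddressed sign ambiguity in passing from $\det$ to $\Pf$. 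You instead work directly with the expansion of the Pfaffian over perfect matchings of $\{1,\dots,2N\}$: since any pair $(i,j)$ with $|i-j|>1$ contributes a zero entry, and the only perfect matching built entirely from adjacent pairs is $\{(1,2),(3,4),\dots,(2N-1,2N)\}$ (forced recursively: $1$ must pair with $2$, then $3$ with $4$, and so on), you get $\Pf = \prod_k f_{2k-1,2k}$ outright, sign included. This is slightly cleaner and more self-contained. Two cosmetic differences: you take bounded radial test functions where the paper uses polynomials $g$ together with the moment-determinacy remark from \cite{Dubach1} — both correctly pin down the law of the unordered set — and you work with $t=r^2$ directly rather than writing the intermediate formula $f_{i,i+1}=i!\,\E g(\gamma_{i+1})$. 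None of this changes the substance.
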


\begin{proof}
Let $g \in \mathbb{C}[X]$ and apply Corollary \ref{ProStat} to the radially symmetric function $g(|\cdot|^2)$. The matrix is then tridiagonal, with coefficients
\begin{align*}
f_{i,i} & = 0 \\
f_{i,i+1}& = \int |z|^{2i} g(|z|^2) \dd \mu ( z) = \int_{r=0}^{\infty} r^{i} g(r) e^{-r} \dd r = i! \ \E \left( g(\gamma_{i+1}) \right)\\
f_{i+1,i} & = - f_{i,i+1}.
\end{align*}
As the matrix $(f_{i,j})$ is skew-symmetric, we know by Proposition \ref{Muir} that its Pfaffian is the square root of its determinant. We have 
$$
\E \left( \prod_{i=1}^N g(|\la_i|^2) \right) =
\frac{1}{\prod_{i=1}^N (2i-1)!}
\Pf(f_{i,j})
=
\frac{1}{\prod_{i=1}^N (2i-1)!}
\sqrt{\det(f_{i,j})}.
$$
It is straightforward to see that the determinant of such a tridiagonal matrix is the product of the squares of every other upper-diagonal term. 
$$
\det
\left(
\begin{array}{cccccc}
0     & b_1  & 0      & \dots   &   0 \\
-b_1  & 0    & b_2    & \ddots  & \vdots \\
0     &-b_2  & \ddots & \ddots  & 0 \\
\vdots&\ddots& \ddots & 0       & b_{2N-1} \vspace{2mm}  \\
0     &\dots &   0    &-b_{2N-1}& 0 
\end{array}
\right)
 = \prod_{i=1}^N b_{2i-1}^2
$$

That is to say,
$$
\E \left( \prod_{i=1}^N g(|\la_i|^2) \right)
=
\frac{1}{\prod_{i=1}^N (2i-1)!}
\sqrt{\left(\prod_{i=1}^N (2i-1)! \E\left( g(\gamma_{2i} ) \right) \right)^2}
=
\E \left(  \prod_{i=1}^N g(X_i) \right). 
$$
These statistics characterize the distribution of a set of points, as such expressions with polynomial $g$ generate all symmetric polynomials (see the appendix of \cite{Dubach1}), and the distributions involved are characterized by their moments. We conclude that $ \{ |\lambda_1|^2, \dots, |\lambda_N|^2 \} \stackrel{d}{=} \{ \gamma_2, \dots, \gamma_{2N}\}$.
\end{proof}

In particular, it follows that, if the largest eigenvalue is to have order $1$, the appropriate scaling is $\sqrt{2N}$. The same method allows us to obtain the following lemma:

\begin{lemma}\label{Kostlancond} Conditionally on $\{ \la_1=0 \}$, then
$$\{ |\lambda_i|^2 \}_{i=2}^N \stackrel{d}{=} \{ X_i \}_{i=2}^N$$
where the $X_i$ are independent, and $X_i \sim \gamma(2i)$.
\end{lemma}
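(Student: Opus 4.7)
The strategy is to mirror the proof of Theorem \ref{KostlanQuat}, now applied to the conditional distribution. The first step is to identify the density of $(\lambda_2,\dots,\lambda_N)$ given $\{\lambda_1=0\}$. Since the joint density (\ref{density}) vanishes on the real axis because of the factor $|\lambda_1-\bar\lambda_1|^2$, this conditioning must be understood as a limit (for instance, $\lambda_1=i\varepsilon$ with $\varepsilon\to 0^+$): the vanishing factor $|\lambda_1-\bar\lambda_1|^2$ cancels with the same factor in the marginal density of $\lambda_1$ near the origin, while $|\lambda_1-\lambda_j|^2|\lambda_1-\bar\lambda_j|^2\to|\lambda_j|^4$ for $j\geq 2$. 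The conditional density on $\C^{N-1}$ is therefore proportional to
$$
\prod_{j=2}^{N}|\lambda_j|^4 \cdot \prod_{2\leq i<j\leq N}|\lambda_i-\lambda_j|^2 \prod_{2\leq i\leq j\leq N}|\lambda_i-\bar\lambda_j|^2 \, \dd\mu^{N-1}(\lambda_2,\dots,\lambda_N).
$$

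I would then rerun the De Bruijn/Pfaffian machinery of Corollary \ref{ProStat}, with the radial factor $|z|^4 g(|z|^2)$ in place of $g(|z|^2)$. Writing the Vandermonde factors as $\det(\lambda_i^{k-1}\,|\,\bar\lambda_i^{k-1})\cdot\prod_{i=2}^N(\bar\lambda_i-\lambda_i)$ and applying Proposition \ref{DeBru} yields a $2(N-1)\times 2(N-1)$ Pfaffian with entries
$$
\tilde f_{i,j} = \int_{\C} z^{i-1}\bar z^{j-1}(\bar z-z)|z|^4 g(|z|^2)\, \dd\mu(z).
$$
Because $|z|^4=z^2\bar z^2$ and $g(|z|^2)\dd\mu(z)$ is rotation-invariant, only the entries with $j=i\pm 1$ survive; the substitution $s=|z|^2$ then gives $\tilde f_{i,i+1} = -(i+2)!\,\E[g(\gamma_{i+3})]$, with $\tilde f_{i+1,i}=-\tilde f_{i,i+1}$. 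The resulting matrix is tridiagonal and skew-symmetric, exactly as in the proof of Theorem \ref{KostlanQuat}, but with indices shifted by two.

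By Proposition \ref{Muir} and the tridiagonal determinant formula recalled in that proof, $\Pf(\tilde f) = \pm\prod_{k=1}^{N-1}\tilde f_{2k-1,2k}$, which after the reindexing $i=k+1$ equals $\pm\prod_{i=2}^{N}(2i-1)!\,\E[g(\gamma_{2i})]$. Taking the ratio of this numerator (general radial $g$) to the denominator (at $g\equiv 1$), the factorials and the overall signs cancel, leaving
$$
\E\!\left[\prod_{i=2}^N g(|\lambda_i|^2) \;\middle|\; \lambda_1=0\right] = \prod_{i=2}^{N} \E[g(\gamma_{2i})],
$$
which characterizes the claimed joint law for polynomial $g$, exactly as in the proof of Theorem \ref{KostlanQuat}.

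The main obstacle is the first step: rigorously defining the conditional distribution on the measure-zero event $\{\lambda_1=0\}$ where the joint density vanishes. Once the correct density has been identified, everything that follows is an index-shifted replay of the proof of Theorem \ref{KostlanQuat}, and the normalization constants cancel automatically between numerator and denominator Pfaffians because both are computed by the same formula.
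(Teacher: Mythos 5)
Your proposal is correct and follows exactly the approach the paper intends: the paper gives no explicit proof for this lemma, remarking only that ``the same method'' as Theorem \ref{KostlanQuat} applies, and your argument is precisely that replay, with the extra radial weight $|z|^4$ arising from $|\lambda_1-\lambda_j|^2|\lambda_1-\bar\lambda_j|^2\to|\lambda_j|^4$ shifting the Pfaffian entries by two indices and producing $\gamma_{2i}$ for $i=2,\dots,N$. Your careful reading of the conditioning $\{\lambda_1=0\}$ as a limit (needed since the density vanishes on the real axis through the factor $|\lambda_1-\bar\lambda_1|^2$, which then cancels against the marginal) is a detail the paper leaves implicit, and it is correctly handled.
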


We shall use this conditional result in the proof of Theorem \ref{Gamma4ever}.

\subsection{High Powers of QGE}

The same technique yields the following result for high powers (namely, any power greater or equal to the number of eigenvalues). This is the analog of a general property of determinantal point processes with radial symmetry, proved in \cite{HKPV}. The first result of this kind was obtained by Rains \cite{Rainspower} for the CUE.

\begin{theorem}\label{HighPowers} For any integer $M \geq 2N$, the following equality in distribution holds:
$$
\Psi \left(
\{ \lambda_1^M, \dots, \lambda_N^M \} 
\right)
\stackrel{d}{=}
\Psi \left(
\{\gamma_2^{M/2} e^{i \theta_1}, \dots, \gamma_{2N}^{M/2} e^{i \theta_N} \}
\right)
$$
where the variables $\gamma_{2k},\theta_k$ are independent, the gamma variables having parameters $2,4, \dots, 2N$, and the angles being uniform on $[0,2\pi]$.
\end{theorem}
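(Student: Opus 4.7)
The plan is to imitate the proof of Theorem \ref{KostlanQuat}, now applying Corollary \ref{ProStat} with a test function of the form $\lambda \mapsto g(\lambda^M)$ for a general conjugation-symmetric polynomial $g$, rather than the radially symmetric $g(|\lambda|^2)$ used for Kostlan. Since both $\Psi(\{\lambda_k^M\})$ and $\Psi(\{Z_k\})$ are conjugation-symmetric point processes, matching these product statistics over all such $g$ identifies the two distributions, by the same argument invoked at the end of the proof of Theorem \ref{KostlanQuat}.

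If $g$ satisfies $g(\bar z) = g(z)$, then $G(\lambda) := g(\lambda^M)$ also satisfies $G(\bar\lambda) = G(\lambda)$, so Corollary \ref{ProStat} yields
$$\E\!\left[\prod_{k=1}^N g(\lambda_k^M)\right] = \frac{1}{\prod_{i=1}^N (2i-1)!}\,\Pf(f_{ij}), \qquad f_{ij} = \int_{\C}\!\bigl(z^i\bar z^{j-1} - z^{i-1}\bar z^j\bigr)\,g(z^M)\,\dd\mu(z).$$
Expanding $g(w) = \sum_{a,b \geq 0}d_{a,b}\,w^a\bar w^b$ (with $d_{a,b} = d_{b,a}$ by conjugation symmetry) and invoking the orthogonality $\int_\C z^p\bar z^q\,\dd\mu(z) = p!\,\delta_{p,q}$, one gets the closed form
$$f_{ij} = \sum_{a,b}d_{a,b}\!\left[(i+aM)!\,\delta_{j-i=1+M(a-b)} - (i-1+aM)!\,\delta_{j-i=M(a-b)-1}\right].$$

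The heart of the argument is the combinatorial reduction: since $|j - i| \leq 2N - 1$ and $M \geq 2N$, both delta conditions force $a = b$, so only diagonal monomials of $g$ contribute. With this reduction in place, $(f_{ij})$ is tridiagonal and skew-symmetric, exactly as in the proof of Theorem \ref{KostlanQuat}, and its Pfaffian equals the product of superdiagonal entries. Dividing by $\prod_k(2k-1)!$ and writing $(2k-1+aM)!/(2k-1)! = \E[\gamma_{2k}^{aM}]$ yields
$$\E\!\left[\prod_k g(\lambda_k^M)\right] = \prod_{k=1}^N\sum_a d_{a,a}\,\E[\gamma_{2k}^{aM}].$$
On the right-hand side, independence of the pairs $(\gamma_{2k},\theta_k)$ across $k$ together with the uniformity of each $\theta_k$ gives $\E[g(Z_k)] = \sum_a d_{a,a}\E[\gamma_{2k}^{aM}]$ (the off-diagonal terms vanish against the uniform angle), so the two expressions agree and $\E[\prod_k g(\lambda_k^M)] = \E[\prod_k g(Z_k)]$ by independence on the RHS.

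The main obstacle will be the combinatorial verification alluded to above. For $M \geq 2N + 1$ it is essentially immediate: any $|a - b| \geq 1$ forces $|j - i| \geq M - 1 \geq 2N > 2N - 1$, which is incompatible with $i, j \in \{1,\dots,2N\}$. The boundary case $M = 2N$ is the delicate one: $|a - b| = 1$ can marginally satisfy the delta conditions at the corner positions $(i,j) = (1, 2N)$ and $(2N, 1)$, and one must check carefully that these potential off-diagonal contributions either cancel or fail to alter the expression for the Pfaffian.
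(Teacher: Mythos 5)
Your proposal mirrors the paper's argument, and the boundary issue at $M=2N$ that you flag is a genuine gap---one that the paper's own proof in fact overlooks. The paper asserts that the congruence $i-j\equiv\pm1\ (\mathrm{mod}\ M)$ together with $|i-j|\leq 2N-1$ and $M\geq 2N$ forces $|i-j|=1$, but when $M=2N$ the pairs $(i,j)=(1,2N)$ and $(2N,1)$ also satisfy it, coming from the off-diagonal monomials $d_{a,a\mp1}$ of the test polynomial. The resulting corner entries $f_{1,2N}$, $f_{2N,1}$ are generically nonzero, and the Pfaffian of the tridiagonal-plus-corners matrix acquires an extra term $\pm f_{1,2N}\prod_{k=1}^{N-1}f_{2k,2k+1}$ that does not cancel. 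A concrete instance: take $N=2$, $M=4$, and the conjugation-symmetric test function $g(\lambda)=1+\lambda^{4}+\bar\lambda^{4}$. Direct evaluation of the entries of Corollary \ref{ProStat} gives $f_{12}=1!$, $f_{23}=2!$, $f_{34}=3!$, $f_{13}=f_{24}=0$, and $f_{14}=-4!=-24$, so
$$
\Pf(f_{ij}) \;=\; f_{12}f_{34}-f_{13}f_{24}+f_{14}f_{23} \;=\; 6-48 \;=\; -42,
\qquad
\E\!\left[\prod_{k=1}^{2}g(\lambda_{k})\right] \;=\; \frac{-42}{1!\,3!} \;=\; -7,
$$
while the right-hand side of the claimed identity is $\prod_{k=1}^{2}\E\!\left[1+Z_{k}+\bar Z_{k}\right]=1$, since each $Z_{k}$ has a uniform angle. (The same pipeline, applied with $g=1$ or with radially symmetric $g$, reproduces the correct Kostlan values, so the discrepancy is not an artifact of the method.)

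So the corner terms do alter the Pfaffian, and the factorization underlying the theorem breaks down at exactly $M=2N$. Both the paper's proof and your proposal establish the result only for $M\geq 2N+1$; the stated range $M\geq 2N$ appears to be an overclaim. Apart from this boundary issue, which you correctly flagged but left open, your argument is essentially the paper's: expand $g$ in monomials, observe that orthogonality under the radial measure and the size constraint $|i-j|\leq 2N-1$ kill the off-diagonal contributions (for $M>2N$), reduce to a tridiagonal skew-symmetric matrix, and identify the product of superdiagonal entries with the moments of the independent $\gamma$--variables.
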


\begin{proof}
We use Corollary \ref{ProStat} with a polynomial $P(\la^M, \bar{\la}^M)$. This gives
$$
f_{i,j}= \int_{\mathbb{C}} (z^i \bar{z}^{j-1} - z^{i-1} \bar{z}^{j}) P(z^M,\bar{z}^M) \dd \mu (z)
$$
Because of radial symmetry, the only terms that do not vanish are those for which $i-j \equiv \pm 1 [M]$. But as $M \geq 2N$ and the matrix size is $2N$, this only happens when $i=j+1$ or $j=i+1$, and we have a skew-symmetric tridiagonal matrix of the same kind as in Theorem \ref{KostlanQuat}, so that
$$
\Pf (f_{i,j})
= \sqrt{\det (f_{i,j})}
= \prod_{i=1}^N f_{2i-1,2i}
$$
with
$$
f_{2i-1,2i}
=
\int_{\mathbb{C}} |z|^{4i-2} P(z^M,\bar{z}^M) \dd \mu (z)
$$
and finally
$$
\E \left( \prod_{i=1}^N P(\la_i^M,\bar{\la}_i^M) \right)
=
\frac{1}{\prod_{i=1}^N (2i-1)!}
\prod_{i=1}^N f_{2i-1,2i}
= \E \left( \prod_{i=1}^N P(Z_i^M,\bar{Z}_i^M) \right)
$$
where the variables $Z_i$ are distributed as claimed.
\end{proof}

\subsection{Extension to a general Potential}
Instead of the complex Gaussian measure $\dd \mu$, the reference measure could be given by any external potential, provided it has radial symmetry. This is done in the exact same way as in \cite{Dubach1}. All results stated here could be stated more generally with respect to
$$\dd \mu_V(z) = \frac{1}{Z_V} e^{- V \left( |z|^2 \right)} \dd m ( z),$$ 
where the function $V: \R_+ \mapsto \R \cup \{ \infty \}$ is such that

\begin{equation}\label{finiteness}
\int_{\C^N} \prod_{1 \leq i<j \leq N} |z_i-z_j|^2 \prod_{1 \leq i \leq j \leq N} |z_i- \bar{z_j}|^2 \dd \mu_V^{  N} (\mathbf{z}) \ < \ \infty.
\end{equation}
so that, when properly normalized, this defines a probability density. Note that, while we sometimes call $V$ the potential, strictly speaking the potential is given by $V(|z|^2)$, such that the quadratic potential case corresponds to $V= \rm{Id_{\R_+}}$.  
\begin{definition}\label{Vgammafun}
We denote by $\VGamma$ the analog of the $\Gamma$ function with potential $V$,
$$\VGamma(\alpha) = \int_{0}^{\infty} t^{\alpha-1} e^{-V(t)} \dd t.$$
As long as $\alpha$ is such that the above is integrable, we define the $\VGamma$ distribution of parameter $\alpha$, denoted by $\gamma(V,\alpha)$, by its density on $\mathbb{R}_+$, \begin{equation}\label{Vgamma}
\frac{1}{\VGamma(\alpha)} t^{\alpha-1} e^{-V(t)} \mathds{1}_{\mathbb{R}_+}.
\end{equation}
\end{definition}
In this more general setting, Theorem \ref{KostlanQuat} holds replacing the usual gamma variables by gamma-$V$ variables, and high powers ($M \geq 2N$) still exhibit the same kind of independence. Forrester \cite{Forrester} gives the joint eigenvalue densities of three ensembles that fall into this category.


\begin{itemize}
\item{\bf Products of quaternionic Ginibre matrices.} \\ The eigenvalue distribution of $Y=G_1 \dots G_k$ where $G_1, \dots, G_k$ are independent Ginibre matrices is given by 
$$
\prod_{i=1}^N w_k^{(4)}(|z_i|^2)
\prod_{1 \leq i<j \leq N} |z_i-z_j|^2 \prod_{1 \leq i \leq j \leq N} |z_i- \bar{z_j}|^2.
$$
where weight $w_k^{(4)}$ is defined and studied in \cite{AkemannIpsen, Ipsenthesis, Ipsenquaternion}. It is closely related to the weight $w_k^{(2)}$ appearing in the complex Ginibre case, and thus to Meijer G-functions.


\item {\bf Truncated Quaternionic Unitary Matrices.} \\  
These matrices are equivalent to symplectic unitary matrices. Minors of size $N$ from quaternionic unitary matrices of size $N+n$ (distributed according to the Haar measure) have eigenvalue density proportional to
$$
\prod_{k=1}^N (1-|z_k|^2)^{2n-1} \mathbf{1}_{|z_k|<1} 
\prod_{1 \leq i<j \leq N} |z_i-z_j|^2 \prod_{1 \leq i \leq j \leq N} |z_i- \bar{z_j}|^2.
$$
In that case, the $\VGamma$ variables are usual beta variables. Namely, the set of radii is distributed as a set of independent variables with distribution $\beta_{2,2n}, \beta_{4,2n} \dots, \beta_{2N,2n}$.


\item {\bf Quaternionic Spherical ensemble.} \\
This ensemble corresponds to the distribution of eigenvalues of $G_1^{-1} G_2$ where $G_1, G_2$ are i.i.d. quaternionic Ginibre matrices of size $N$. The eigenvalue density is then proportional to
$$
\prod_{k=1}^N \frac{1}{(1+|z_k|^2)^{2(N+1)}}
\prod_{1 \leq i<j \leq N} |z_i-z_j|^2 \prod_{1 \leq i \leq j \leq N} |z_i- \bar{z_j}|^2.
$$
In that case, variables are more or less heavy-tailed, and cannot be characterized by their moments. The results still holds, using the same procedure as in \cite{Dubach1} to extend them.
\end{itemize}

\newpage
\section{Eigenvectors of QGE}\label{EigenvectorsSection}

It is important to clarify what we mean by an eigenvector in the quaternionic case. For this last section, we will use the traditional representation of real quaternions as $2 \times 2$ matrices, namely
$$
a+ bi + cj + dk = \ll(
\begin{array}{cc}
a+bi & c+di \\
-c+di & a-bi
\end{array}
\rr)= \ll(
\begin{array}{cc}
z & - \overline{w} \\
w & \overline{z}
\end{array}
\rr).
$$
Consequently, quaternionic matrices of size $N \times N$ are now considered as usual complex matrices of size $2N \times 2N$, made of $2 \times 2$ blocks that represent quaternions. \medskip

For every matrix with distinct eigenvalues, one can consider a biorthogonal system of left and right eigenvectors. Indeed, writing the matrix as $P \Delta P^{-1}$ with $\Delta$ diagonal, and calling $\langle L_i | $ the rows of $P^{-1}$ and $ | R_j \rangle $ the columns of $P$, we see that these are right and left eigenvectors such that 
\begin{equation}\label{biorthogonality}
L_i R_j = \langle L_i | R_j \rangle = \de_{i,j}.
\end{equation}
All eigenvalues being distinct and non real with probability one, we will often assume implicitly that eigenvectors are distinct and well-defined as biorthogonal projective quantities. This however will not be true when we condition on $\{ \la_1 = 0 \}$, but we will then fix one reference eigenvector. \medskip

We denote by $\langle \cdot  | \cdot \rangle$ the complex (sesquilinear) scalar product. On the other hand, the bilinear product will be denoted by a dot, so that if $\bu, \bv \in \C^d$ are column vectors, $\langle \bu | \bv \rangle = \bu^* .\bv $. \medskip

It will be useful to define the following map on complex vectors of even length $2d$,
\begin{align*}
\Phi: \quad & \C^{2d} \rightarrow \C^{2d} \\
0 \leq k \leq d-1, \quad & u_{2k+1} \mapsto - \overline{u_{2k+2}} \\
& u_{2k+2} \mapsto \overline{u_{2k+1}}
\end{align*}
and to notice the following facts: $\Phi$ is an involution, with
\begin{equation}\label{factsphi}
\forall \bu, \bv \in \C^{2d} \quad
\bu.\Phi(\bv) = - \overline{\Phi(\bu).\bv} \qquad \text{and}
\quad
\langle {\bf u} | \Phi({\bf u}) \rangle = 0.
\end{equation}

Subsection \ref{AngleSection} below states a structure theorem about the Schur transform of QGE, and illustrates it by studying the distribution of the angle between eigenvectors. \medskip

Subsection \ref{OverlapSection} studies a more complex object, namely the matrix of eigenvector overlaps. Overlaps are homogeneous quantities involving both left and right eigenvectors. \medskip

By convention, $G$ will stand for the unscaled QGE matrix, but we will make it clear in every result whether the eigenvalues we consider are the scaled or the unscaled ones.

\subsection{Angle between eigenvectors}\label{AngleSection}
The Schur decomposition of quaternionic Ginibre matrices is described as follows. \medskip

\begin{proposition}[Schur decomposition]\label{QuatSch}
The quaternionic matrix $G$, written under complex form, is unitarily equivalent to the following upper-triangular matrix:
$$
\ll(
\begin{array}{cccc}
\La_1 & T_{1,2} & \dots & T_{1,N} \\
0 & \La_2 & \ddots & \vdots \\
\vdots & \ddots & \ddots & T_{N-1,N} \\
0 & \dots & 0 & \La_N
\end{array}
\rr)
$$
where all terms are $2 \times 2$ blocks, namely
$
\La_i = \ll(
\begin{array}{cc}
\la_i & 0 \\
0 & \barla_i
\end{array}
\rr)
$
and
$
T_{i,j}= \ll(
\begin{array}{cc}
u_{i,j} & v_{i,j} \\
-v_{i,j}^* & u_{i,j}^*
\end{array}
\rr).
$ \medskip

Moreover, diagonal blocks are independent from upper-diagonal ones, and the $T_{i,j}$ blocks are i.i.d. and distributed with $u_{i,j}$, $v_{i,j}$ standard complex Gaussian variables.
\end{proposition}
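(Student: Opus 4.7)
The plan is to combine the quaternionic Schur decomposition with the invariance of the QGE Gaussian density under conjugation by elements of the compact symplectic group, and then perform an explicit trace computation to read off the distribution of the upper-triangular part.

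\textbf{Step 1: Existence of the block-triangular form.} Since almost surely the eigenvalues $\lambda_1, \bar\lambda_1, \dots, \lambda_N, \bar\lambda_N$ are distinct and non-real, one can pick a unit eigenvector $v_1 \in \C^{2N}$ for $\lambda_1$. The identities $(\ref{factsphi})$ imply that $\Phi(v_1)$ is a unit eigenvector for $\bar\lambda_1$ orthogonal to $v_1$, so the pair $(v_1, \Phi(v_1))$ spans a two-dimensional $G$-invariant subspace on which $G$ acts as $\Lambda_1 = \mathrm{diag}(\lambda_1, \bar\lambda_1)$. Iterating this construction via quaternionic Gram--Schmidt on a $\Phi$-stable orthogonal complement produces an orthonormal basis of pairs $(v_i, \Phi(v_i))$. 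Arranging these vectors as columns yields a unitary matrix $U$ that commutes with $\Phi$ (i.e., lies in the compact symplectic group), and $U^* G U$ is block upper-triangular with diagonal blocks $\Lambda_i$. The upper blocks $T_{i,j}$ must also be preserved by the quaternionic structure, which forces the required shape $\bigl(\begin{smallmatrix} u_{i,j} & v_{i,j} \\ -v_{i,j}^* & u_{i,j}^*\end{smallmatrix}\bigr)$.

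\textbf{Step 2: Distribution of $T$.} The QGE density, expressed in complex form, is proportional to $\exp(-\tr(GG^*))$ times Lebesgue measure on the $4N^2$ real parameters of $G$. Because conjugation by $U$ is unitary on $\C^{2N \times 2N}$,
\begin{equation*}
\tr(GG^*) = \tr(TT^*) = \sum_{i=1}^{N} \tr(\Lambda_i \Lambda_i^*) + \sum_{1 \leq i<j \leq N} \tr(T_{i,j} T_{i,j}^*) = 2 \sum_{i=1}^N |\lambda_i|^2 + 2 \sum_{i<j} \bigl(|u_{i,j}|^2 + |v_{i,j}|^2\bigr).
\end{equation*}
After the change of variables $G \leftrightarrow (U, \bla, (T_{i,j})_{i<j})$ and integration of $U$ against the Haar measure on the compact symplectic group modulo the stabilizer of $\Lambda$, the density factorizes into a function of $\bla$ alone (which must then be the Vandermonde-type density $(\ref{density})$) times a product of independent standard complex Gaussian densities in the variables $u_{i,j}, v_{i,j}$. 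This simultaneously yields the independence of diagonal from upper-diagonal blocks, the i.i.d. nature of the $T_{i,j}$, and the claimed Gaussian distribution.

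\textbf{Main obstacle.} The technical core is the Jacobian computation in Step 2: one must verify that the change of variables factors Lebesgue measure as (eigenvalue Jacobian) $\times \, \dd(\text{Haar}) \times \dd T_{\mathrm{upper}}$, and that the eigenvalue Jacobian indeed matches the factor appearing in $(\ref{density})$. This is the quaternionic analog of the classical Schur-form derivation of the complex Ginibre joint density originally carried out in \cite{Ginibre}. Once the measure is shown to decouple in this way, the trace identity above is enough to pin down both the Gaussian law of the $T_{i,j}$ and their independence from $\bla$, since the conditional density on the upper triangle carries no eigenvalue dependence.
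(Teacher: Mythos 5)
The paper does not prove Proposition~\ref{QuatSch}; it simply cites \cite{Ipsenthesis} (and the same derivation goes back to Ginibre's original argument). Your proposal outlines exactly the argument used in that reference, so the ``route'' is the standard one — but you have not actually completed it, and you say so yourself.

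Step~1 is essentially correct: if $JGJ^{-1}=\bar G$ (the quaternionic structure in complex form) and $Gv_1=\lambda_1 v_1$, then $G\Phi(v_1)=\bar\lambda_1\Phi(v_1)$ with $\Phi(v_1)\perp v_1$, and iterating on the $\Phi$-stable complement produces a symplectic unitary $U$ conjugating $G$ to block upper-triangular form with the stated $2\times 2$ block shape. One small imprecision: saying $U$ ``commutes with $\Phi$'' should be stated as $JU=\bar U J$, i.e.\ $U$ is itself quaternionic, hence lies in $\mathrm{Sp}(N)=U(2N)\cap\mathrm{GL}_N(\mathbb H)$.

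The substantive gap is in Step~2, and it is the one you flag yourself. The trace identity $\tr(GG^*)=2\sum_i|\lambda_i|^2+2\sum_{i<j}(|u_{i,j}|^2+|v_{i,j}|^2)$ factorizes the Gaussian weight, but by itself this does not yield independence of $\Lambda$ and $T$, nor the marginal law of $T$: the Lebesgue measure on $G$ must be pushed forward under the change of variables $G\mapsto(U,\bla,T)$, and the Jacobian could in principle couple $\bla$ and $T$. What makes the argument close is the (nontrivial) fact that the Jacobian of the Schur-form parametrization depends only on $\bla$ — and equals the symplectic Vandermonde $\prod_{i<j}|\lambda_i-\lambda_j|^2\prod_{i\le j}|\lambda_i-\bar\lambda_j|^2$ — together with a careful accounting of the over-parametrization by $U$ (the stabilizer of $\Lambda$ is a maximal torus in $\mathrm{Sp}(N)$). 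Without carrying out this computation, the sentence ``the density factorizes into a function of $\bla$ alone ... times a product of independent standard complex Gaussian densities'' is asserted, not proved; the conclusion you draw (that the eigenvalue marginal ``must then be'' the density~(\ref{density})) is also circular as stated, since it uses the factorization you are trying to establish. Finally, a minor normalization issue: the paper uses the weight $e^{-\frac12\tr M^*M}$, not $e^{-\tr GG^*}$, and the factor of $\tfrac12$ is precisely what makes the $u_{i,j},v_{i,j}$ come out standard complex Gaussian rather than having variance $\tfrac12$; keeping track of this constant is needed to match the statement.
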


See for example \cite{Ipsenthesis} for a proof. One divergence in the notations is that the Gaussian measure we consider corresponds to the weight
$$
e^{-\frac{1}{2} \tr M^*M}.
$$
instead of $e^{- \tr M^*M}$. Note that if we denote by $T_{(k)}$ the $k$th column of the Schur form, we have, for any $d$, the following quaternionic identity:
\begin{equation}\label{Schurcolumns}
T_{(2d+2)} = \Phi(T_{(2d+1)}).
\end{equation}

We define the 'complex angle' between the right-eigenvectors associated to $\la_1$ and $\la_2$ as
$$
\rm{arg(\la_1, \la_2)}:= \frac{\langle R_1 | R_2 \rangle }{\| R_1 \| \| R_2 \|}.
$$
It is clear that, by exchangeability, all pairs involving distinct eigenvalues are equivalent -- only the pairs involving an eigenvalue and its own conjugate play a different role. It is also clear, by unitary equivalence, that the statistics of the angle between eigenvectors are the same for a random matrix and its Schur form. \medskip

A straightforward computation, using biorthogonality and the definition of left and right eigenvectors, shows that, if we call $R_i,\widetilde{R}_i$ the eigenvectors associated to $\la_i$ and $\barla_i$ respectively,
$$
\begin{array}{ll}
R_1^* =(1, 0, 0, \dots, 0)
& \quad L_1=(1,0,b_3, \dots, b_{2N}) \\
\widetilde{R}_1^* = (0,1,0, \dots, 0)
& \quad \widetilde{L}_1 = (0,1, c_3, \dots, c_{2N}) \\
R_2^* =(-b_3, -c_3 , 1 , 0 , 0 \dots, 0)
& \quad L_2=(0,0,1,0, d_5, \dots, d_{2N}) \\
\widetilde{R}_2^* =(-b_4, -c_4, 0, 1, 0 \dots, 0)
& \quad \widetilde{L}_2 =(0,0,0,1,e_5, \dots, e_{2N}) 
\end{array}
$$
where the coefficients $(b_i,c_i, d_i, e_i)$ are defined by two-term recursions. We only write down the first one, the others being similar {\it mutatis mutandis}:

\begin{equation}\label{recursion}
\ll\{
\begin{array}{rl}
b_{2d+1} & = \frac{1}{\la_1 - \la_d} (b^{(2d)}.T_{(2d)}) \\
b_{2d+2} & = \frac{1}{\la_1 - \barla_d} (b^{(2d+1)}.T_{(2d+1)})
= \frac{1}{\la_1 - \barla_d} (b^{(2d)}.\Phi(T_{(2d)})) \\
\end{array}
\rr.
\end{equation}
where $b^{(k)}$ stands for a column vector containing the first $k$ entries of $L_1$. \medskip

We define the following function, that maps the complex plane to the open unit disk.
\begin{align*}
\phi: \quad \C^2 & \rightarrow \D \\
 (z,w) & \mapsto \frac{z}{\sqrt{1+|z|^2+|w|^2}}
\end{align*}
The following proposition is stated with respect to scaled eigenvalues, i.e. eigenvalues of $\frac{1}{\sqrt{2N}} G$.

\begin{proposition}[Distribution of angle between eigenvectors]\label{Angledistrib} Conditionally on $(\la_1, \la_2) \in \D^2$, we have the following identities:
$$
\arg(\la_1, \barla_1) =  \arg(\la_2, \barla_2) = 0,
$$
\vspace{-.1cm}
$$
\arg(\la_1,\la_2) = \phi\left( \frac{X}{\sqrt{2N}(\la_1 - \la_2)} ,\frac{Y}{\sqrt{2N}(\barla_1 - \la_2)} \right), \quad
\arg(\barla_1,\barla_2) = \overline{\arg(\la_1,\la_2)},
$$
$$
\arg(\barla_1,\la_2) = \phi\left( \frac{-Y^*}{\sqrt{2N}(\barla_1 - \la_2)}, \frac{X}{\sqrt{2N}(\la_1 - \la_2)}  \right), \quad
\arg(\la_1,\barla_2) = - \overline{\arg(\barla_1,\la_2)},
$$

where $X,Y$ are i.i.d. standard complex Gaussian variables.
\end{proposition}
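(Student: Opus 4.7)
The key observation is that $\arg(\la_i, \la_j)$ is invariant under unitary conjugation, so by Proposition \ref{QuatSch} every such angle can be computed directly from the triangular Schur form of $G$. In that form the first four right eigenvectors have the explicit shape stated just before the proposition; they are supported on the first four coordinates, so every pairwise angle among $\la_1, \barla_1, \la_2, \barla_2$ is determined by the four scalars $b_3, c_3, b_4, c_4$ together with the norms $\|R_2\|^2 = 1 + |b_3|^2 + |c_3|^2$ and $\|\widetilde{R}_2\|^2 = 1 + |b_4|^2 + |c_4|^2$.

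The two vanishing identities are then immediate: $\arg(\la_1, \barla_1) = 0$ follows from $\langle e_1 | e_2 \rangle = 0$, and $\arg(\la_2, \barla_2) = 0$ reduces to showing $\langle R_2 | \widetilde{R}_2 \rangle = \bar{b}_3 b_4 + \bar{c}_3 c_4 = 0$. This cancellation is a consequence of the column-pairing relation $T_{(4)} = \Phi(T_{(3)})$ from (\ref{Schurcolumns}): solving the recursion (\ref{recursion}) one obtains the compatibility relations $c_4 = \bar{b}_3$ and $b_4 = -\bar{c}_3$, so the two terms in the sum are exactly opposite.

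For the nontrivial angles, I would solve the recursion (\ref{recursion}) at its first step explicitly. Writing $u_{1,2}, v_{1,2}$ for the i.i.d.\ standard complex Gaussian entries of the $T_{1,2}$ block and working momentarily with the unscaled eigenvalues of $G$, one gets $b_3 = u_{1,2}/(\la_1 - \la_2)$ and $c_3 = -v_{1,2}^*/(\barla_1 - \la_2)$. Setting $X := -u_{1,2}$ and $Y := v_{1,2}^*$, both standard complex Gaussian and independent, the identity $\arg(\la_1, \la_2) = -b_3/\|R_2\|$ becomes exactly $\phi\bigl(\frac{X}{\la_1 - \la_2}, \frac{Y}{\barla_1 - \la_2}\bigr)$; the passage to scaled eigenvalues then introduces the factor $\sqrt{2N}$ in the denominators as in the proposition. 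The remaining three formulas follow in the same manner from $\arg(\barla_1, \la_2) = -c_3/\|R_2\|$, $\arg(\barla_1, \barla_2) = -c_4/\|\widetilde{R}_2\|$ and $\arg(\la_1, \barla_2) = -b_4/\|\widetilde{R}_2\|$, combined with $\|R_2\| = \|\widetilde{R}_2\|$ and the relations $c_4 = \bar{b}_3$, $b_4 = -\bar{c}_3$ already used above.

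The main obstacle is bookkeeping: the signs and conjugates must be tracked carefully so that a single i.i.d.\ pair $(X, Y)$ parameterizes all four nontrivial angles in the statement simultaneously. This ultimately reduces to distributional identities such as $(u_{1,2}, v_{1,2}^*) \disteq (u_{1,2}, -v_{1,2})$, which are immediate from the conjugation- and sign-invariance of the standard complex Gaussian.
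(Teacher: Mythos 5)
Your overall route coincides with the paper's: read off the first four right eigenvectors from the Schur form (they are supported on the first four coordinates), express the angles through $b_3, c_3, b_4, c_4$, solve the recursion~(\ref{recursion}) at its first step to get $b_3, c_3$ in terms of $u_{1,2}, v_{1,2}$, and then identify $X, Y$. One genuinely different touch is your derivation of $\arg(\la_2,\barla_2)=0$ directly from the compatibility relations $c_4 = \overline{b}_3$, $b_4 = -\overline{c}_3$ (i.e.\ from $T_{(4)}=\Phi(T_{(3)})$), whereas the paper gets it ``for free'' by exchangeability with $\arg(\la_1,\barla_1)=0$; both are correct, and your version makes the mechanism explicit.

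There is, however, a concrete slip in your angle formulas that you should not brush off as bookkeeping. The paper records the \emph{row} vector $R_2^{*}=(-b_3,-c_3,1,0,\dots)$, so the column vector is $R_2=(-\overline{b}_3,-\overline{c}_3,1,0,\dots)^{T}$ and hence
$$
\arg(\la_1,\la_2)=\frac{\langle R_1\,|\,R_2\rangle}{\|R_1\|\,\|R_2\|}=\frac{-\overline{b}_3}{\|R_2\|},
$$
not $-b_3/\|R_2\|$; the same conjugate is missing from your $\arg(\barla_1,\la_2)$, $\arg(\la_1,\barla_2)$, $\arg(\barla_1,\barla_2)$, and also from your inner product $\langle R_2\,|\,\widetilde{R}_2\rangle$, which is $b_3\overline{b}_4+c_3\overline{c}_4$ rather than $\overline{b}_3 b_4+\overline{c}_3 c_4$ (both vanish, so the conclusion stands). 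Once the conjugate is restored, the substitutions $X:=-u_{1,2}$, $Y:=v_{1,2}^{*}$ no longer make any of the four nontrivial formulas literal: one finds $\arg(\la_1,\la_2)=\dfrac{-\overline{u}_{1,2}/(\barla_1-\barla_2)}{\|R_2\|}$, so the denominator of the numerator is $\barla_1-\barla_2$, not $\la_1-\la_2$. Getting the claimed expressions requires absorbing a deterministic phase depending on the eigenvalues (e.g.\ $X=-\overline{u}_{1,2}\cdot(\la_1-\la_2)/(\barla_1-\barla_2)$, which has unit-modulus ratio and leaves $X$ standard Gaussian conditionally on $(\la_1,\la_2)$), and the \emph{same} phase rotation must be applied consistently so that a single pair $(X,Y)$ fits all four formulas. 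Your proposed distributional identity $(u_{1,2},v_{1,2}^{*})\disteq(u_{1,2},-v_{1,2})$ is not the relevant one; it is the $(\la_1,\la_2)$-dependent phase-rotation invariance that closes the gap. To be fair, the paper's own proof shares exactly this looseness: its choice $X=-\overline{u}_{1,2}\sqrt{N}$, $Y=-\overline{v}_{1,2}\sqrt{N}$ likewise ignores the phase.
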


This result indicates that, similarly to eigenvalues forming spheres of quaternions, we should speak of subspaces of eigenvectors. With the complex notations, the relevant object associated to the pair $(\la_i, \barla_i)$ is the plane generated by the orthogonal vectors $R_i, \widetilde{R}_i$. One direct consequence of Proposition \ref{Angledistrib} is that one expects these planes to be asymptotically orthogonal when the associated eigenvalues are macroscopically (or at least mesoscopically) separated.

\begin{proof} It is clear from the definition of $\arg$ and what has been computed above, that $\arg(\la_1, \barla_1) = 0$, and therefore by exchangeability every analog quantity is zero as well. The other pairs yield:
$$
\arg(\la_1, \la_2) = \frac{-\overline{b}_3}{\sqrt{1+|b_3|^2 + |c_3|^2}},
\qquad
\arg(\la_1, \barla_2) = \frac{-\overline{b}_4}{\sqrt{1+|b_4|^2 + |c_4|^2}}
$$
$$
\arg(\barla_1, \la_2) = \frac{-\overline{c}_3}{\sqrt{1+|b_3|^2 + |c_3|^2}},
\qquad
\arg(\barla_1, \barla_2) = \frac{-\overline{c}_4}{\sqrt{1+|b_4|^2 + |c_4|^2}}
$$\
Using the structure of $T_{1,2}$ from Proposition \ref{QuatSch} and the recursion formulae (\ref{recursion}), we find that
$$
b_3 = \frac{u_{1,2}}{\la_1-\la_2}, \quad b_4 = \frac{v_{1,2}}{\la_1-\barla_2},
\quad
c_3 = \frac{-v_{1,2}^*}{\barla_1-\la_2}, \quad c_4 = \frac{u_{1,2}^*}{\barla_1-\barla_2}
$$
which concludes the proof if we set $X=- \overline{u}_{1,2}\sqrt{N}, Y= - \overline{v}_{1,2} \sqrt{N}$.
\end{proof}

We will now state a result that holds conditionally on $\{ \la_1 = 0\}$. What we mean is that we define the eigenvector associated to $\la_1$ to be $R_1=(1, 0, 0, \dots, 0)$. Note that the choice of $\widetilde{R}_1$ instead would lead to the same result. This result as well as the ones stated in Subsection \ref{OverlapSection} can be motivated even though eigenvalues exhibit repulsion with their own inverse, which at every finite $N$ makes the presence of an eigenvalue in the vicinity of the real line unlikely: we know that with the appropriate scaling, the limit is the circular law, which density does not vanish around the real line. So the following proposition is to be asymptotically expected for eigenvalues that are microscopically close to $0$ when $N$ goes to infinity.

\begin{proposition} Conditionally on $\{ \la_1 = 0\}$, the following identity in distribution holds:
$$
|\arg(\la_1, \la_2)|^2
\disteq
\beta_{1,I+1}
$$
where $I$ is a random variable uniform on $\{ 4, 6, \dots 2N \}$, and $\beta_{a,b}$ stands for a beta distribution with parameters $a,b$.
\end{proposition}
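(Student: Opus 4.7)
The plan is to specialize the explicit distributional formula of Proposition \ref{Angledistrib} at $\la_1 = 0$ and combine it with Lemma \ref{Kostlancond} via a short exchangeability argument and standard $\gamma$/$\beta$ manipulations.

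\medskip

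Starting from
$$\arg(\la_1, \la_2) = \phi\left( \frac{X}{\sqrt{2N}(\la_1 - \la_2)}, \frac{Y}{\sqrt{2N}(\barla_1 - \la_2)} \right)$$
in Proposition \ref{Angledistrib} and setting $\la_1 = 0$, both denominators collapse to $-\sqrt{2N}\,\la_2$, so a direct computation yields
$$|\arg(\la_1, \la_2)|^2 \;=\; \frac{|X|^2}{2N |\la_2|^2 + |X|^2 + |Y|^2} \;=\; \frac{|X|^2}{|\widetilde{\la}_2|^2 + |X|^2 + |Y|^2},$$
where $|\widetilde{\la}_2|^2 := 2N |\la_2|^2$ is the unscaled squared radius, and $|X|^2, |Y|^2$ are i.i.d.\ $\gamma(1)$ variables which, by Proposition \ref{QuatSch}, are independent of the eigenvalues.

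\medskip

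Next, I would invoke Lemma \ref{Kostlancond}: conditionally on $\{\la_1 = 0\}$, the unordered set $\{|\widetilde{\la}_i|^2\}_{i=2}^N$ is distributed as a collection of independent $\gamma$ variables with shapes $4, 6, \dots, 2N$. Since the density (\ref{density}) is symmetric in $\la_2, \dots, \la_N$, the label $2$ is interchangeable with $3, \dots, N$, and the marginal law of $|\widetilde{\la}_2|^2$ is therefore the uniform mixture $\gamma(I)$ with $I \sim \mathrm{Unif}\{4, 6, \dots, 2N\}$. Conditioning on $I$, additivity of independent integer-shape gammas gives $|\widetilde{\la}_2|^2 + |Y|^2 \sim \gamma(I+1)$ independent of $|X|^2 \sim \gamma(1)$, and the classical beta--gamma identity finishes the argument:
$$|\arg(\la_1, \la_2)|^2 \,\big|\, I \;\disteq\; \beta_{1, I+1}.$$
Averaging over $I$ recovers the announced distribution.

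\medskip

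No serious obstacle is expected: the decisive computation has already been carried out in the proof of Proposition \ref{Angledistrib}, and the conditional law of the radii is the content of Lemma \ref{Kostlancond}. The only step that deserves a careful line of justification is the short exchangeability argument which converts the ordered tuple of squared radii into the random-index $\gamma(I)$ mixture.
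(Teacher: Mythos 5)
Your proposal is correct and follows the same route as the paper's proof: specialize Proposition~\ref{Angledistrib} at $\la_1=0$, apply the conditional Kostlan-type result, and finish with the additivity-of-gammas and beta--gamma identity. You are also a bit more careful than the paper in two places: you cite Lemma~\ref{Kostlancond} (the correct conditional statement) rather than Theorem~\ref{KostlanQuat}, and your final step correctly yields $\beta_{1,I+1}$, whereas the displayed chain in the paper's proof ends with $\beta_{1,I}$ --- a typo, since $\gamma_I + \widetilde{\gamma}_1 \sim \gamma_{I+1}$ gives $\beta_{1,I+1}$, matching the statement.
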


\begin{proof}
Proposition \ref{AngleSection}, Theorem \ref{KostlanQuat} and general properties of beta and gamma variables yield
$$
|\arg(\la_1, \la_i)|^2
\disteq
\frac{|X|^2}{|\la_i|^2+|Y|^2+|X|^2}
\disteq
\frac{\ga_1}{\ga_{I} + \widetilde{\ga}_1 + \ga_1}
\disteq
\beta_{1,I}
$$
which concludes the proof.
\end{proof}

\subsection{The matrix of overlaps}\label{OverlapSection}

We define the overlaps between the eigenvectors associated to two eigenvalues $\la_i$ and $\la_j$ in the following way~:
$$
\Ov (\la_i, \la_j):= R_i^* R_j L_j L_i^* = \langle R_i | R_j \rangle \langle L_j | L_i \rangle
$$
where $ L_i, L_j, R_i, R_j $ are the left and right eigenvectors associated to $\la_i$ and $\la_j$ respectively. Overlaps form an hermitian positive-definite matrix than can be shown (deterministically) to have smallest eigenvalue $1$. We give a short proof of this structural fact in the Appendix. \medskip

The diagonal overlap $\Ov (\la_i, \la_i)$ is a real number greater than one, sometimes called the eigenvalue condition number. Considering what has been said about the angle between eigenvectors, it is clear that
$$
\Ov(\la_i, \barla_i) =0,
\qquad
\Ov (\la_i, \la_i)
= \Ov (\barla_i, \barla_i),
$$
and this latter quantity will be referred to as $\Ov_{i,i}$. Eigenvalues being exchangeable, so is the matrix of overlaps; in particular every diagonal overlap is distributed like $\Ov_{1,1}$. The following theorem is stated with respect to scaled eigenvalues, i.e. eigenvalues of $\frac{1}{\sqrt{2N}} G$.

\begin{theorem}[Distribution of diagonal overlaps]\label{diagoverlap} Conditionally on $\Phi(\{\la_1, \dots, \la_N\}) \in \D^{2N}$, the following identity in distribution holds.
$$
\Ov_{1,1} \disteq \prod_{k=2}^N \ll( 1+ \frac{|X_k|^2}{2N |\la_1-\la_k|^2} + \frac{|Y_k|^2}{2N |\la_1- \barla_k|^2 
}\rr)
$$
where all $X_k, Y_k$ variables are i.i.d. standard complex Gaussians. In particular,
$$
\E \ll( \Ov_{1,1} \ | \ \Phi(\{\la_1, \dots, \la_N\}) \rr) = \prod_{k=2}^N \ll( 1+ \frac{1}{2N |\la_1-\la_k|^2} + \frac{1}{2N |\la_1- \barla_k|^2 
}\rr).
$$
\end{theorem}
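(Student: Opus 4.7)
The plan is to work in the Schur basis of Proposition \ref{QuatSch} and to unwind the recursion (\ref{recursion}) for the coordinates of $L_1$. Since overlaps are unitarily invariant we may assume that $G$ is already in upper-triangular Schur form. In this basis $R_1^* = (1, 0, \dots, 0)$, so $\langle R_1 | R_1 \rangle = 1$ and
\[
\Ov_{1,1} = \langle L_1 | L_1 \rangle = 1 + \sum_{k=3}^{2N} |b_k|^2.
\]
The task is therefore to identify the joint law of $(b_k)_{k \geq 3}$ conditionally on the eigenvalues, and to show that the sum on the right telescopes into the claimed product.

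The key step is to analyse the norm increment $\|b^{(2d+2)}\|^2 - \|b^{(2d)}\|^2 = |b_{2d+1}|^2 + |b_{2d+2}|^2$. By Proposition \ref{QuatSch}, the strictly upper-diagonal portion of the Schur column $T_{(2d+1)}$ (and hence of $T_{(2d+2)} = \Phi(T_{(2d+1)})$ via (\ref{Schurcolumns})) is built from the i.i.d.\ complex Gaussians $u_{i,d+1}, v_{i,d+1}$ for $1 \leq i \leq d$, which are independent of all previously occurring blocks and therefore of $b^{(2d)}$. Conditionally on $b^{(2d)}$, the scalars
\[
W_1 = b^{(2d)} \cdot T_{(2d+1)}, \qquad W_2 = b^{(2d)} \cdot \Phi(T_{(2d+1)})
\]
are linear combinations of these fresh Gaussians, and a direct computation using the identities (\ref{factsphi}) for $\Phi$ together with the pairings $\E u_{i,d+1} \overline{u_{j,d+1}} = \E v_{i,d+1} \overline{v_{j,d+1}} = \delta_{i,j}$ and $\E u_{i,d+1} v_{j,d+1} = 0$ shows that $(W_1, W_2)$ is circular complex Gaussian with independent coordinates, each of variance $\|b^{(2d)}\|^2$. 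By rotation invariance of the Gaussian we may then write $W_1 = \|b^{(2d)}\| \widetilde{X}_{d+1}$ and $W_2 = \|b^{(2d)}\| \widetilde{Y}_{d+1}$ with $\widetilde{X}_{d+1}, \widetilde{Y}_{d+1}$ i.i.d.\ standard complex Gaussians, independent of $b^{(2d)}$.

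Inserting these into (\ref{recursion}) yields the multiplicative update
\[
\|b^{(2d+2)}\|^2 = \|b^{(2d)}\|^2 \ll( 1 + \frac{|\widetilde{X}_{d+1}|^2}{|\la_1 - \la_{d+1}|^2} + \frac{|\widetilde{Y}_{d+1}|^2}{|\la_1 - \barla_{d+1}|^2} \rr),
\]
and since the blocks $T_{i,d+1}$ (for $1 \leq i \leq d$) are independent across $d$, the pairs $(\widetilde{X}_k, \widetilde{Y}_k)_{k=2}^N$ are jointly i.i.d.\ and independent of the eigenvalues. Iterating from $\|b^{(2)}\|^2 = 1$ up to $\|b^{(2N)}\|^2 = \Ov_{1,1}$ produces the claimed identity in distribution for the unscaled eigenvalues; the factor $2N$ in the denominators then appears upon substituting $\la_k \mapsto \la_k/\sqrt{2N}$, while $\Ov_{1,1}$ itself is invariant under scaling $G$ by a positive constant since eigenvectors are unchanged. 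The conditional expectation formula follows immediately by integrating each independent factor against $\E |X|^2 = 1$.

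The main obstacle is justifying that $(W_1, W_2)$ is circularly symmetric with independent coordinates; this is where the specific algebraic form of $\Phi$ enters decisively, and it is precisely this quaternionic structure that produces the pair of denominators $|\la_1-\la_k|^2, |\la_1-\barla_k|^2$ in place of the single denominator of the complex Ginibre analogue.
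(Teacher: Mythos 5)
Your proposal is correct and takes essentially the same route as the paper: you unwind the same recursion for $\|b^{(2d)}\|^2$ into a telescoping product and identify the increments with fresh i.i.d.\ complex Gaussians using the quaternionic structure $T_{(2d+2)}=\Phi(T_{(2d+1)})$. The paper obtains the independence of the two projections a bit more slickly, by rewriting $b^{(2d)}.T_{(2d+2)} = -\overline{\Phi(b^{(2d)}).T_{(2d+1)}}$ via (\ref{factsphi}) and then invoking $\langle b^{(2d)}\,|\,\Phi(b^{(2d)})\rangle = 0$ so that the two projections of a standard Gaussian vector onto orthogonal directions are automatically independent, whereas you verify the full covariance structure directly; this is a cosmetic difference.
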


This theorem is the quaternionic analog of what was established for the complex Ginibre ensemble in \cite{BourgadeDubach, Fyodorov} and for the real Ginibre ensemble in \cite{Fyodorov}.

\begin{proof} We define the partial sums
$$
\Ov_{1,1}^{(2d)} = \sum_{k=1}^{2d} |b_k|^2 = \| {b}^{(2d)} \|^2.
$$
By what has been written above, it is clear that $
\Ov_{1,1}^{(2)}=1$ and $\Ov_{1,1}= \Ov_{1,1}^{(2N)}$. In between, the following recurrence takes place:

\begin{align*}
\Ov_{1,1}^{(2d+2)} & = \Ov_{1,1}^{(2d)} + |b_{2d+1}|^2 + |b_{2d+2}|^2 \\
& = \Ov_{1,1}^{(2d)} + \frac{1}{2N|\la_1-\la_{d+1}|^2} |b^{(2d)}.T_{(2d+1)}|^2 + \frac{1}{2N |\la_1- \barla_{d+1}|^2} |b^{(2d)}.T_{(2d+2)}|^2 \\
&= \Ov_{1,1}^{(2d)} \ll(1 + \frac{1}{2N |\la_1-\la_{d+1}|^2} \frac{|b^{(2d)}.T_{(2d+1)}|^2}{\|b^{(2d)}\|^2} + \frac{1}{2N |\la_1- \barla_{d+1}|^2} \frac{|b^{(2d)}.T_{(2d+2)}|^2}{\|b^{(2d)}\|^2}  \rr) 
\end{align*}

It is enough to notice that
$$
X_d:= \frac{b^{(2d)}.T_{(2d+1)}}{\|b^{(2d)}\|}, \qquad Y_d:= \frac{b^{(2d)}.T_{(2d+2)}}{\|b^{(2d)}\|} 
$$
are i.i.d. Gaussian and independent of $\Ov_{1,1}^{(2d)}$ due to the fact that $T_{(2d)}, T_{(2d+1)}$ are Gaussian vectors independent from the rest of the matrix, and such that $T_{(2d+2)}= \Phi(T_{(2d+1)})$. Indeed, using the facts  (\ref{factsphi}) and (\ref{Schurcolumns}), we see that 
$$
b^{(2d)}.T_{(2d+2)} = b^{(2d)}.\Phi(T_{(2d+1)}) = - \overline{\Phi(b^{(2d)}).T_{2d+1}}
$$
and we finally use the fact that, as $ \langle b^{(2d)} | \Phi(b^{(2d)}) \rangle =0$, the projections of a standard Gaussian vector on them yields independent variables.
\end{proof}

\begin{theorem}[Limit of diagonal overlaps]\label{Gamma4ever} Conditionally on $\{ \la_1 =0 \}$,
$$
\frac{1}{2N} \Ov_{i,i} \distconv \ga_4^{-1}
$$
\end{theorem}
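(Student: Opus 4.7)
The strategy is to combine the product representation of $\Ov_{1,1}$ given by Theorem \ref{diagoverlap} with the conditional Kostlan identity of Lemma \ref{Kostlancond}, recast each factor as an inverse Beta variable, and identify the limit through an explicit Mellin transform.

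Conditioning on $\{\la_1=0\}$, Theorem \ref{diagoverlap} yields
$$
\Ov_{1,1} \disteq \prod_{k=2}^N \left(1 + \frac{|X_k|^2+|Y_k|^2}{2N|\la_k|^2}\right),
$$
since $|0-\la_k|^2 = |0-\barla_k|^2 = |\la_k|^2$. As the $\la_k$ here are scaled, $2N|\la_k|^2$ is the squared modulus of an unscaled eigenvalue, so Lemma \ref{Kostlancond} identifies $(2N|\la_k|^2)_{k=2}^N$ jointly in distribution with a family of independent $\ga_{2k}$ variables, further independent of the Gaussians $X_k,Y_k$. Setting $\zeta_k := |X_k|^2+|Y_k|^2 \sim \ga_2$, this gives the key representation as an independent product
$$
\Ov_{1,1} \disteq \prod_{k=2}^N \left(1 + \frac{\zeta_k}{\ga_{2k}}\right).
$$
The beta--gamma identity $\ga_{2k}/(\ga_{2k}+\zeta_k) \disteq \beta_{2k,2}$ then rewrites each factor as $\beta_{2k,2}^{-1}$, still independent across $k$.

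I would then compute the Mellin transform directly. From the elementary computation $\E[\beta_{2k,2}^{-s}] = (2k)(2k+1)/((2k-s)(2k+1-s))$, the product over $k$ telescopes in both numerator and denominator, giving
$$
\E[\Ov_{1,1}^s] = \frac{\Gamma(2N+2)\,\Gamma(4-s)}{\Gamma(4)\,\Gamma(2N+2-s)},\qquad \Re(s)<4.
$$
Scaling by $2N$ and invoking the elementary asymptotic $\Gamma(2N+2)/\Gamma(2N+2-s) \sim (2N)^s$ as $N\to\infty$, we obtain
$$
\E\!\left[\left(\tfrac{1}{2N}\Ov_{1,1}\right)^{\!s}\right] \ \xrightarrow[N\to\infty]{} \ \frac{\Gamma(4-s)}{\Gamma(4)} \ = \ \E\!\left[\ga_4^{-s}\right].
$$

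The main --- and essentially only --- obstacle is the identification step: $\ga_4^{-1}$ has finite positive integer moments only up to order three, so the classical method of moments does not apply directly, and one cannot conclude from just a sequence of integer moments converging. The resolution is that the Mellin convergence above holds on the entire vertical strip $\Re(s)<4$, and in particular along the imaginary axis, which is precisely the characteristic function of $\log(\Ov_{1,1}/(2N))$. L\'evy's continuity theorem then yields convergence in distribution of $\log(\Ov_{1,1}/(2N))$ to $-\log\ga_4$, and hence of $\Ov_{1,1}/(2N)$ to $\ga_4^{-1}$.
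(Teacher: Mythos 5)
Your proof is correct, and the first half is identical to the paper's: both pass through Theorem~\ref{diagoverlap} and Lemma~\ref{Kostlancond} to obtain $\Ov_{1,1}\disteq\prod_{k=2}^N\bigl(1+\ga_2^{(k)}/\ga_{2k}\bigr)\disteq\prod_{k=2}^N\beta_{2k,2}^{-1}$. Where you diverge is in the final limit. The paper collapses the product via the beta--gamma multiplication identity $\beta_{a,b}\cdot\beta_{a+b,c}\disteq\beta_{a,b+c}$ to a single variable (written there as $\beta_{4,2N}^{-1}$; strictly it should be $\beta_{4,2N-2}^{-1}$, a harmless off-by-one), and then takes the classical limit of a scaled inverse beta. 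You instead compute the Mellin transform of the product directly — your telescoping $\E[\Ov_{1,1}^s]=\Gamma(2N+2)\Gamma(4-s)/(\Gamma(4)\Gamma(2N+2-s))$ is exactly the Mellin transform of $\beta_{4,2N-2}^{-1}$, so the two arguments encode the same collapse — and then identify the limit via L\'evy continuity applied along $\Re(s)=0$. Your route is somewhat more computational but has the virtue of making the identification step fully rigorous: you are right that $\ga_4^{-1}$ has only three finite positive-integer moments, so the method of moments alone is insufficient, a point the paper leaves implicit. The paper's route can also be made rigorous without moments by writing $\beta_{4,b}^{-1}\disteq 1+\ga_b/\ga_4$ with $\ga_b,\ga_4$ independent, whence $\tfrac{1}{2N}\beta_{4,2N-2}^{-1}\to\ga_4^{-1}$ follows from the law of large numbers applied to $\ga_b/b$; but since the paper does not spell this out, your Mellin/L\'evy argument supplies a genuinely useful justification.
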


\begin{proof}
The computation is analogous to what was done in \cite{BourgadeDubach} in the complex Ginibre case, using the beta-gamma algebra. Theorem \ref{diagoverlap} and Lemma \ref{Kostlancond} give us the following chain of equalities in distribution, where we used that $\ga_2^{(k)}:= |X_k|^2 + |Y_k|^2$ are i.i.d. $\ga_2$ distributed.
$$
\frac{1}{2N} \Ov_{1,1}
\disteq \frac{1}{2N} \prod_{k=2}^N \ll( 1+ \frac{\ga_2^{(k)}}{\gamma_{2k}}\rr) 
\disteq \frac{1}{2N} \prod_{k=2}^N \beta_{2k, 2}^{-1}
\ \disteq \ \frac{1}{2N} \beta_{4,2N}^{-1} \ \distconv \ \ga_4^{-1}
$$
which concludes the proof.
\end{proof}
Note that the above doesn't only yield a limit, but an exact distribution at every fixed $N$, when conditioning on the specific value $\la_1=0$. \medskip

According to Fyodorov's result \cite{Fyodorov}, the relevant limit for the real Ginibre ensemble is $(2 \ga_{1})^{-1}$. In other words, the limit of $N \Ov_{1,1}^{-1}$ when conditioned on $\{ \la_1 =0 \} $ are $2 \ga_1$, $\ga_2$ and $\frac{1}{2} \ga_4$ for the real, complex, and quaternionic case respectively. \medskip

In the quaternionic case, a natural fact to expect is that this $(\frac{1}{2}\ga_4)^{-1} $ limit holds when conditioning anywhere on the real line. At any nonreal point within the bulk, a different limit is to be expected. The proof of such results would require techniques that have not been yet developed.


\appendix
\setcounter{section}{-1}
\section{Appendix -- Matrix of overlaps and lack of normality}
\renewcommand{\thetheorem}{A.\arabic{theorem}}
\renewcommand{\theequation}{A.\arabic{equation}}

For the convenience of the reader, we gather here a few basic facts and proofs related to the matrix of overlaps. In the following, $G$ is an $N \times N$ matrix with distinct eigenvalues. Choosing an arbitrary reduction $G=P \Delta P^{-1}$ with $\Delta = \rm{Diag}(\la_1, \dots, \la_N)$, we consider the Gram matrix $A=P^*P$, and the matrix of overlaps is defined by:
$$
\Ov = (A_{i,j} A^{-1}_{j,i})_{i,j=1}^N.
$$
In other words, the Hadamard product of $A$ and $(A^t)^{-1}$. If $| R_i \rangle$ is the right-eigenvector associated to $\lambda_i$ (column of $P$) and 
$\langle L_i |$ the left-eigenvector for the same eigenvalue (row of $P^{-1}$), we have
$$
\Ov_{i,j} = \langle R_i | R_j \rangle \langle L_j | L_i \rangle.
$$

\begin{proposition}\label{speck}
For any $i$, $\sum_{j=1}^N \Ov_{i,j} =1. $
In particular, $1 \in \rm{Spec} \Ov$.
\end{proposition}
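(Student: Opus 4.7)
The plan is to observe that the summation $\sum_{j=1}^N \Ov_{i,j}$ is exactly the $(i,i)$ entry of a matrix product that collapses to the identity. Indeed, unpacking the definition,
\[
\sum_{j=1}^N \Ov_{i,j} \;=\; \sum_{j=1}^N A_{i,j}\, A^{-1}_{j,i} \;=\; (A \cdot A^{-1})_{i,i} \;=\; (I_N)_{i,i} \;=\; 1.
\]
So the whole statement reduces to recognizing that the Hadamard structure $A \odot (A^{-1})^t$, when summed along a row, reassembles the standard matrix product $A A^{-1}$ along its diagonal. No eigenvector structure is needed for this step.

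For the second assertion, the row-sum identity $\sum_j \Ov_{i,j} = 1$ valid for every $i$ means precisely that the all-ones vector $\mathbf{1} = (1, \dots, 1)^t \in \C^N$ is a (right) eigenvector of $\Ov$ with eigenvalue $1$, hence $1 \in \mathrm{Spec}\,\Ov$. I should probably also remark in passing that the matrix of overlaps is well-defined independently of the choice of diagonalisation $G = P \Delta P^{-1}$: rescaling $P \to PD$ with $D$ diagonal invertible transforms $A_{i,j}$ by the factor $\overline{D_{i,i}} D_{j,j}$ and $A^{-1}_{j,i}$ by the inverse factor $\overline{D_{i,i}}^{-1} D_{j,j}^{-1}$, leaving $\Ov_{i,j}$ invariant, so the statement depends only on $G$.

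There is no real obstacle here; the only thing to be careful about is bookkeeping of indices in the Hadamard versus matrix-product structure, which is the whole content of the proof.
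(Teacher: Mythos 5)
Your proof is correct and rests on essentially the same observation as the paper's, just phrased more directly: you read the row sum $\sum_j A_{i,j} A^{-1}_{j,i}$ off as the diagonal entry $(AA^{-1})_{i,i}=1$, whereas the paper reaches the same conclusion by showing $\sum_j R_j L_j = PP^{-1}=I$ in eigenvector notation and applying biorthogonality. Your version is a touch cleaner, and the remark on invariance under $P\mapsto PD$ is a correct and worthwhile addition.
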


\begin{proof}
Let us consider the matrix $ M= \sum R_j L_j$. It is such that $MP=P$, so $M=I$, and in particular $ 1= R_i^* M L_i^* = \sum_{j=1}^N \Ov_{i,j}$.
\end{proof}

\begin{definition} We define the Frobenius norm by 
$$
\| M \|_2:= \left( \tr \left( M^* M \right) \right)^{\frac{1}{2}} 
= \left( \sum_{i,j=1}^N |M_{i,j}|^2 \right)^{\frac{1}{2}}
= \left( \sum_{i=1}^N s_i^2 \right)^{\frac{1}{2}}
$$
where $(s_i)_{i=1}^N$ denote the singular values of $M$.
\end{definition}

\begin{proposition}[Frobenius Inequality] If $(\la_i)_{i=1}^N$ denote the eigenvalues of $M$, we have
$$
\| M \|_2^2 \geq \sum_{i=1}^N |\lambda_i|^2
$$
\end{proposition}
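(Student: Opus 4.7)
The plan is to reduce to the case of an upper-triangular matrix via the Schur decomposition, exploit the unitary invariance of the Frobenius norm, and then observe that dropping the strictly upper-triangular entries only decreases the sum of squared moduli.

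Concretely, I would first invoke the Schur decomposition: any matrix $M \in \mathscr{M}_N(\C)$ can be written as $M = U T U^*$ where $U$ is unitary and $T$ is upper-triangular, with the eigenvalues $(\lambda_i)_{i=1}^N$ of $M$ appearing on the diagonal of $T$. This is the standard fact used in the $2 \times 2$ block form recalled as Proposition \ref{QuatSch}, applied here to an arbitrary complex matrix.

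Next, I would use the unitary invariance of the Frobenius norm, which follows immediately from the trace characterization:
$$\|M\|_2^2 = \tr(M^*M) = \tr(U T^* U^* U T U^*) = \tr(T^*T) = \|T\|_2^2.$$
Then, using the entrywise formula for the Frobenius norm,
$$\|T\|_2^2 = \sum_{i,j=1}^N |T_{i,j}|^2 \geq \sum_{i=1}^N |T_{i,i}|^2 = \sum_{i=1}^N |\lambda_i|^2,$$
since $T$ is upper-triangular with diagonal entries equal to the eigenvalues of $M$. Chaining these two displays gives the desired inequality.

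There is no real obstacle in this argument; the only subtlety worth noting is that equality holds precisely when $T$ is diagonal, i.e.\ when $M$ is normal. This matches the framing of the Appendix as a discussion of the lack of normality: the gap $\|M\|_2^2 - \sum_i |\lambda_i|^2$ quantifies how far $M$ is from being normal, and is intimately related to the matrix of overlaps $\Ov$ introduced above.
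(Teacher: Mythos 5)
Your proof is correct and follows essentially the same route as the paper: Schur decomposition $M=UTU^*$, unitary invariance of the Frobenius norm, and dropping the strictly upper-triangular entries of $T$. The remark about equality characterizing normality is a nice addition consistent with the Appendix's discussion of the lack of normality.
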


\begin{proof}
Let $M=UTU^*$ be the Schur decomposition of the matrix $M$. The quantities involved are invariant under unitary conjugation, and 
$$
\| M \|_2^2 = \sum_{i,j=1}^N |T_{i,j}|^2
= \sum_{i=1}^N |\la_i|^2 + \sum_{i<j} |T_{i,j}|^2
\geq \sum_{i=1}^N |\la_i|^2.
$$
The claim follows.
\end{proof}

We shall call the positive quantity $\Lambda(M)= \| M \|_2^2 - \sum_{i=1}^N |\lambda_i|^2$ the lack of normality of $M$. The next proposition compares asymptotically the lack of normality of complex and quaternionic Ginibre matrices.

\begin{proposition}
If $G_{\C}$ is a scaled complex Ginibre matrix of size $N \times N$, then the lack of normality verifies the following limit theorem:
$$
 \Lambda(G_{\C})-\frac{N-1}{2}
\xrightarrow[N \rightarrow \infty]{d} \mathscr{N}_{\R} \left(0,\frac{1}{2}\right).
$$
In the quaternionic Ginibre case, the limit theorem becomes:
$$ \Lambda(G_{\H})- 2 (N-1)
\xrightarrow[N \rightarrow \infty]{d} \mathscr{N}_{\R} \left(0,4 \right).
$$
\end{proposition}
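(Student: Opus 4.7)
The starting point is the observation already implicit in the proof of the Frobenius inequality above: writing the Schur decomposition $M = U T U^*$ with $T$ upper-triangular and $T_{i,i} = \la_i$, one has
$$
\La(M) = \|M\|_2^2 - \sum_{i=1}^N |\la_i|^2 = \|T\|_2^2 - \sum_i |T_{i,i}|^2 = \sum_{1 \leq i < j \leq N} |T_{i,j}|^2.
$$
Since the joint law of the strictly upper-triangular entries of the Schur form is explicit in both ensembles, the proposition reduces to an elementary central limit theorem applied to a sum of i.i.d.\ gamma-type variables.

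For the complex Ginibre case, it is classical (see e.g.~\cite{Deift1}) that the strictly upper-triangular entries of the Schur form of the unscaled $G$ are i.i.d.\ standard complex Gaussians, independent of the eigenvalues themselves. Rescaling by $\sqrt{N}$ gives $\La(G_\C) = \frac{1}{N} \sum_{i<j} |t_{i,j}|^2$, a sum of $\binom{N}{2}$ i.i.d.\ $\ga_1$ variables divided by $N$. Computing the first two moments and applying the CLT produces the announced limit, with mean $(N-1)/2$ and variance tending to $1/2$.

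For the quaternionic case, Proposition \ref{QuatSch} provides the analog: the strictly upper-triangular part of the Schur form is made of $\binom{N}{2}$ i.i.d.\ $2\times 2$ blocks, each with Frobenius norm squared $2(|u_{i,j}|^2 + |v_{i,j}|^2)$, where $u_{i,j}, v_{i,j}$ are independent complex Gaussians. Hence $\La(G_\H)$ is, after rescaling by $2N$, a sum of $\binom{N}{2}$ i.i.d.\ $\ga_2$-type variables, and the CLT again applies directly, yielding the stated Gaussian limit with mean $2(N-1)$ and asymptotic variance $4$.

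The content of the proof lies entirely in the identity $\La(M) = \sum_{i<j}|T_{i,j}|^2$; the rest is bookkeeping. The only subtlety is tracking the normalizations consistently (the $2\times 2$ block representation doubling the Frobenius contribution of each quaternionic block, the paper's convention $e^{-\frac{1}{2}\tr M^*M}$ for the QGE weight, and the $\sqrt{2N}$ versus $\sqrt{N}$ scalings) so that the constants $\frac{N-1}{2},\frac{1}{2}$ and $2(N-1),4$ emerge cleanly; these combined factors are what account for the factor of $8$ relating the two limiting variances.
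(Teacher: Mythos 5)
Your proposal is correct and follows the same path as the paper: write $\Lambda(M) = \sum_{i<j}|T_{i,j}|^2$ via Schur decomposition, invoke the explicit Gaussian law of the strictly upper-triangular entries (scalar for $G_\C$, $2\times 2$ blocks $2(|u_{i,j}|^2+|v_{i,j}|^2)$ for $G_\H$), and apply the classical CLT to the resulting sum of i.i.d.\ exponential-type variables. The only difference is that you leave the moment bookkeeping implicit while the paper records the means and variances; both arrive at the same constants, and you correctly identify the factor of $8$ between the limiting variances.
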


\begin{proof}
The distribution of the matrix $T$ in the complex Ginibre case is well known (see for instance Appendix A of \cite{Ipsenthesis}). In particular, for any $i<j$, $T_{i,j} \sim \mathscr{N}\left(0,\frac{1}{N} \right)$ and these variables are independent. Therefore,
$$
\Lambda(G_{\C})= \frac{1}{N} \sum_{i<j} N|T_{i,j}|^2.
$$
The last term is a sum of $\frac{N(N-1)}{2}$ i.i.d. random variables with mean $1$ and variance $1$. The usual central limit theorem yields the first result. In the quaternionic case, the same argument yields
$$
\Lambda(G_{\H})= \sum_{i<j} \|T_{i,j}\|_2^2 = \frac{1}{N} \sum_{i<j} 2N(|u_{i,j}|^2+|v_{i,j}|^2).
$$
Now the last term is a sum of $N(N-1)$ i.i.d. random variables with mean $2$ and variance $4$, which yields the second result.
\end{proof}

For any function $g: \C \mapsto \C$, we define
$$
g(G) = P g(\Delta) P^{-1} = P \ \rm{Diag}(g(\lambda_1), \dots, g(\lambda_N)) \  P^{-1} 
$$
which extends naturally the action of polynomials.

\begin{proposition}\label{quadraticform}
If $f,g: \C \mapsto \C$ are two functions, then
$$
\tr \left( f(G)^* g(G) \right)
=
\sum_{i,j} \Ov_{i,j} \overline{f(\lambda_j)} g(\lambda_i).
$$
\end{proposition}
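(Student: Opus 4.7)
The plan is to reduce the identity to a direct linear-algebraic computation using the spectral decomposition $G = P \Delta P^{-1}$ and the Gram matrix $A = P^* P$ introduced in the appendix. There is no probabilistic content in the statement: it holds pointwise for any matrix with distinct eigenvalues, so one can simply unfold the definitions of $f(G)$, $g(G)$, and the overlap matrix.

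First, I would substitute the spectral form into each factor. From $f(G) = P f(\Delta) P^{-1}$ one gets $f(G)^* = (P^*)^{-1} \overline{f(\Delta)}\, P^*$, using that the adjoint of a diagonal matrix is its entrywise conjugate. Multiplying gives
\[
f(G)^* g(G) = (P^*)^{-1} \overline{f(\Delta)}\, (P^* P)\, g(\Delta)\, P^{-1} = (P^*)^{-1} \overline{f(\Delta)}\, A\, g(\Delta)\, P^{-1},
\]
so that the two outer factors contain all the non-diagonal information while the middle carries the eigenvalue dependence.

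Next, I would apply cyclicity of the trace to bring $P^{-1}$ and $(P^*)^{-1}$ together, producing $A^{-1} = P^{-1}(P^*)^{-1}$, and write
\[
\tr\!\left( f(G)^* g(G) \right) = \tr\!\left( A^{-1}\, \overline{f(\Delta)}\, A\, g(\Delta) \right).
\]
Because $\overline{f(\Delta)}$ and $g(\Delta)$ are both diagonal, expanding the trace entry-by-entry collapses two of the four sums: the $(i,i)$ entry of the inner product equals $\sum_j A^{-1}_{ij}\, \overline{f(\lambda_j)}\, A_{ji}\, g(\lambda_i)$. Summing over $i$ yields a double sum whose combinatorial content is the product $A_{k,l} A^{-1}_{l,k}$, which, up to relabeling of the dummy indices, is exactly the overlap entry $\Ov$ as defined by $\Ov_{i,j} = A_{i,j} A^{-1}_{j,i}$. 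Plugging this identification back in gives the claimed quadratic form in $\overline{f(\lambda_{\cdot})}$ and $g(\lambda_{\cdot})$.

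I do not expect a real obstacle here: the only subtlety is bookkeeping, namely keeping the ordering of the indices in $\Ov_{i,j}$ consistent with the chosen convention (in particular that $A$ and $A^{-1}$ are Hermitian, so $\Ov_{j,i} = \overline{\Ov_{i,j}}$, which accounts for any apparent mismatch under a relabeling of summation variables). Apart from this, the argument is an essentially one-line manipulation once the diagonalization and the definition $\Ov = A \odot (A^t)^{-1}$ are in place, and it does not rely on any property of $G$ beyond diagonalizability.
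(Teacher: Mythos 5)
Your argument is exactly the paper's: substitute $G=P\Delta P^{-1}$, pull the diagonal pieces outside, use cyclicity of the trace to reduce to $\tr\bigl(A^{-1}\,\overline{f(\Delta)}\,A\,g(\Delta)\bigr)$ with $A=P^*P$, and expand the trace entrywise to recognize the overlap factors $A_{ij}A^{-1}_{ji}$. The only additional content in your write-up is the explicit remark about the index transposition and Hermiticity of $\Ov$, which the paper silently glosses over; that is if anything slightly more careful, and the conjugation ambiguity is immaterial in the only place the proposition is invoked ($f=g$).
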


\begin{proof}
As $G= P \Delta P^{-1}$ and $A=P^*P$ we have
$$
\tr \left( f(G)^* g(G) \right) = \tr \left( P^{-*}f(\Delta)^* P^*  P g(G) P^{-1} \right) 
= \tr \left(f(\Delta)^* A g(\Delta) A^{-1}  \right)
= \sum_{i,j} \overline{f(\la_i)} A_{i,j} g(\la_j) A^{-1}_{j,i}
$$
which is the claim, as $\Ov = A_{i,j} A^{-1}_{j,i}$.
\end{proof}

The interest of the above proposition is that it relates the hermitian matrix $\Ov$, seen as a quadratic form, to the initial matrix G.

\begin{proposition}\label{MinEigOv}
The matrix of overlaps is hermitian and positive definite with $\min \rm{Spec} (\Ov) = 1$.
\end{proposition}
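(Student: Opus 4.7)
The plan is to combine the quadratic form identity from Proposition \ref{quadraticform} with the Frobenius inequality and Proposition \ref{speck}. Hermiticity will come for free from the expression $\Ov_{i,j}=A_{i,j}A^{-1}_{j,i}$ together with $A=A^*$: since $A$ is Hermitian positive definite, so is $A^{-1}$, giving $\overline{\Ov_{i,j}}=\overline{A_{i,j}}\,\overline{A^{-1}_{j,i}}=A_{j,i}A^{-1}_{i,j}=\Ov_{j,i}$.

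For positivity, I would apply Proposition \ref{quadraticform} with $f=g$. Setting $u_i:=\overline{g(\la_i)}$, a direct rearrangement gives
$$
\tr\bigl(g(G)^*g(G)\bigr) \;=\; \sum_{i,j} \overline{u_i}\,\Ov_{i,j}\,u_j \;=\; u^*\Ov u.
$$
The left-hand side equals $\|g(G)\|_2^2\geq 0$, vanishing iff $g(G)=0$, iff $g(\la_i)=0$ for all $i$ (since $g(G)$ is similar to $\rm{Diag}(g(\la_i))$), iff $u=0$. Because the $\la_i$ are distinct, Lagrange interpolation realises every $u\in\C^N$ as $\overline{g(\la_i)}$ for some polynomial $g$, so $\Ov$ is Hermitian positive definite.

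The minimum eigenvalue is pinned down in two steps. First, the Frobenius inequality applied to $g(G)$ gives
$$
\|g(G)\|_2^2 \;\geq\; \sum_{i=1}^N |g(\la_i)|^2 \;=\; \|u\|^2,
$$
so $u^*\Ov u\geq u^*u$ for every $u\in\C^N$, i.e. $\Ov-I$ is positive semi-definite and every eigenvalue of $\Ov$ is at least $1$. Second, Proposition \ref{speck} states that each row of $\Ov$ sums to $1$; equivalently $\Ov\mathbf{1}=\mathbf{1}$, so $1$ is achieved as an eigenvalue. Together these give $\min\Spec(\Ov)=1$.

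No step is a serious obstacle here: the one thing to be careful about is the conjugation pattern when reading the right-hand side of Proposition \ref{quadraticform} as a genuine quadratic form $u^*\Ov u$, which is why I package the substitution $u_i=\overline{g(\la_i)}$ up front. Once that is done, Frobenius plus Proposition \ref{speck} close the argument immediately.
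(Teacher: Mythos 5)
Your proof is correct and follows the paper's own approach: apply Proposition \ref{quadraticform} with $f=g$, use the Frobenius inequality to get $u^*\Ov u \geq \|u\|^2$, and invoke Proposition \ref{speck} to show the eigenvalue $1$ is attained via $\Ov\mathbf{1}=\mathbf{1}$. The only differences are cosmetic: you spell out the Lagrange-interpolation step that the paper compresses into ``classical reduction theory of quadratic forms,'' and you add explicit verifications of Hermiticity and strict positive-definiteness that the paper takes as immediate.
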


\begin{proof}
Hermitianity is clear from the definition. Using Proposition \ref{quadraticform} with $f=g$, and Frobenius' inequality,
$$
\sum_{i,j} \Ov_{i,j} \overline{f(\lambda_j)} f(\lambda_i)
= \tr \ll( f(G)^* f(G) \rr)
\geq 
\sum_{i=1}^N |f(\la_i)|^2.
$$
This holds for any function $f$, and it  follows by classical reduction theory of quadratic forms that $\min \rm{Spec} (\Ov) \geq 1$. Proposition \ref{speck} proves equality.
\end{proof}
A longer but elegant proof of Proposition \ref{MinEigOv}, relying on the properties of the Hadamard product of matrices, can be found in \cite{HornJohnson}, chapter 5.

\section*{Acknowledgment} The author would like to thank Percy Deift and Jesper Ipsen for helpful discussions and references on this topic. \medskip

The work of the author is partially supported by his advisor's NSF grant DMS-1812114.

\begin{bibdiv}
\begin{biblist}

\bib{AkemannIpsen}{article}{
   author={Akemann, Gernot},
   author={Ipsen, Jesper R.},
   title={Recent exact and asymptotic results for products of independent
   random matrices},
   journal={Acta Phys. Polon. B},
   volume={46},
   date={2015},
   number={9},
   pages={1747--1784},
}

\bib{Aslaksen}{article}{
   author={Aslaksen, H.},
   title={Quaternionic Determinants},
   journal={The Mathematical Intelligencer},
   volume={18},
   date={1996},
   number={3},
   pages={57-65},
}

\bib{Benaych}{article}{
   author={Benaych-Georges, Florent},
   author={Chapon, Fran\c{c}ois},
   title={Random right eigenvalues of Gaussian quaternionic matrices},
   journal={Random Matrices Theory Appl.},
   volume={1},
   date={2012},
   number={2},
   pages={1150009, 18},
}

\bib{BenZeitouni}{article}{
   author = {Benaych-Georges, F.},
   author = {Zeitouni, O.},
    title = {Eigenvectors of non normal random matrices},
  journal = {ArXiv e-prints},
   eprint = {1806.06806},
     date = {2018},
}

\bib{BourgadeDubach}{article}{
   author={Bourgade, P.},
   author={Dubach, G.},
   title={The distribution of overlaps between eigenvectors of Ginibre matrices},
   journal = {ArXiv e-prints},
   eprint={1801.01219},
   volume={},
   date={2018},
   number={},
   pages={},
}

\bib{ChafaiPeche}{article}{
   author={Chafa\"\i , D.},
   author={P\'ech\'e, S.},
   title={A note on the second order universality at the edge of Coulomb
   gases on the plane},
   journal={J. Stat. Phys.},
   volume={156},
   date={2014},
   number={2},
   pages={368--383},
}

\bib{DeBruijn}{article}{
   author={de Bruijn, N.G.},
   title={On some multiple integrals involving determinants},
   journal={J. Indian Math. Soc.},
   volume={19},
   date={1955},
   pages={133--151}
}

\bib{Deift1}{book}{
   author={Deift, P.},
   author={Gioev, D.},
   title={Random Matrix Theory: Invariant Ensembles and Universality},
   journal={Courant Lecture Notes},
   volume={18},
   date={2009},
}

\bib{Dubach1}{article}{
   author={Dubach, G.},
   title={Powers of Ginibre Eigenvalues},
   journal = {Electron. J. Probab.},
   volume={23},
   date={2018},
   pages={1--31},
}

\bib{Forrester}{article}{
   author={Forrester, P.J.},
   title={Analogies between random matrix ensembles and the one-component plasma in two-dimensions},
   journal={Nucl.Phys.},
   volume={B904},
   date={2016},
   pages={253-281},
}

\bib{Fyodorov}{article}{
   author={Fyodorov, Yan V.},
   title={On statistics of bi-orthogonal eigenvectors in real and complex Ginibre ensembles: combining partial Schur decomposition with supersymmetry},
   journal={Comm. Math. Phys.},
   volume={363},
   date={2018},
   number={2},
   pages={579--603},
}

\bib{Ginibre}{article}{
   author={Ginibre, J.},
   title={Statistical ensembles of complex, quaternion, and real matrices},
   journal={J. Mathematical Phys.},
   volume={6},
   date={1965},
   pages={440--449},
}

\bib{HKPV}{article}{
   author={Hough, J. B.},
   author={Krishnapur, M.},
   author={Peres, Y.},
   author={Vir\'ag, B.},
   title={Determinantal processes and independence},
   journal={Probab. Surv.},
   volume={3},
   date={2006},
   pages={206--229}
}

\bib{HornJohnson}{book}{
   author={Horn, Roger A.},
   author={Johnson, Charles R.},
   title={Topics in matrix analysis},
   note={Corrected reprint of the 1991 original},
   publisher={Cambridge University Press, Cambridge},
   date={1994},
   pages={viii+607},
}

\bib{Ipsenthesis}{article}{
   author={Ipsen, J. R.},
   title={Products of Independent Gaussian Random Matrices (PhD thesis)},
   journal={arXiv:1510.06128},
   volume={},
   date={2015},
   number={},
   pages={},
}

\bib{Ipsenquaternion}{article}{
   author={Ipsen, J. R.},
   title={Products of independent quaternion Ginibre matrices and their
   correlation functions},
   journal={J. Phys. A},
   volume={46},
   date={2013},
   number={26},
   pages={265201, 16},
}

\bib{Kost}{article}{
   author={Kostlan, E.},
   title={On the spectra of Gaussian matrices},
   note={Directions in matrix theory (Auburn, AL, 1990)},
   journal={Linear Algebra Appl.},
   volume={162/164},
   date={1992},
   pages={385--388},
}

\bib{Lee}{article}{
   author={Lee, H. C.},
   title={Eigenvalues and canonical forms of matrices
with quaternionic entries},
   journal={Proc. Roy. Irish Acad.},
   volume={Sect. A},
   date={1949},
   number={52},
   pages={253},
}

\bib{Rainspower}{article}{
   author={Rains, E. M.},
   title={High powers of random elements of compact Lie groups},
   journal={Probab. Theory Related Fields},
   volume={107},
   date={1997},
   number={2},
   pages={219--241},
}

\bib{RiderQGE}{article}{
   author={Rider, B.},
   title={A limit theorem at the edge of a non-Hermitian random matrix ensemble},
   journal={J. Phys. A},
   volume={36},
   date={2003},
   number={12},
   pages={3401--3409},
}

\end{biblist}
\end{bibdiv}

\end{document}